\numberwithin{equation}{section}
\newtheorem{thm}{Theorem}[section]
\newtheorem{cor}[thm]{Corollary}
\newtheorem{lem}[thm]{Lemma}
\newtheorem*{opq*}{\bf Problem}
\newtheorem{dfn}[thm]{Definition}
\theoremstyle{remark}
\newtheorem{rem}[thm]{Remark}
\theoremstyle{definition}
\newtheorem{exa}[thm]{Example}
\newcommand*{\cbb}{\mathbb{C}}
\newcommand*{\D}{\mathrm{d\hspace{.1ex}}}
\newcommand*{\dbb}{\mathbb{D}}
\newcommand*{\E}{\mathrm{e}}
\newcommand*{\ecal}{\EuScript{E}}
\newcommand*{\gibh}{\mathscr{I}_{\hh_1,\hh_2}}
\newcommand*{\gqbh}{\mathscr{Q}_{\hh_1,\hh_2}}
\newcommand*{\gnbh}{\mathscr{N}_{\hh_1,\hh_2}}
\newcommand*{\gsbh}{\mathscr{S}_{\hh_1,\hh_2}}
\newcommand*{\gxbh}{\mathscr{X}_{\hh_1,\hh_2}}
\newcommand*{\Ge}{\geqslant}
\newcommand*{\hh}{\EuScript{H}}
\newcommand*{\I}{\mathrm{i}}
\newcommand*{\is}[2]{\langle#1,#2\rangle}
\newcommand*{\jd}[1]{\mathcal N(#1)}
\newcommand*{\kk}{\EuScript{K}}
\newcommand*{\Le}{\leqslant}
\newcommand*{\mcal}{\EuScript{M}}
\newcommand*{\ob}[1]{{\mathcal R}(#1)}
\newcommand*{\ogr}[1]{\boldsymbol B(#1)}
\newcommand*{\supp}{\mathrm{supp}\,}
\newcommand*{\tbb}{\mathbb{T}}
\begin{document}
   \title[Convergence of power sequences of B-operators with
applications] {Convergence of power sequences of
B-operators \\ with applications to stability
   }
   \author[S. Chavan]{Sameer Chavan}
   \address{Department of Mathematics and Statistics\\
Indian Institute of Technology Kanpur, India}
   \email{chavan@iitk.ac.in}
   \author[Z.\ J.\ Jab{\l}o\'nski]{Zenon Jan
Jab{\l}o\'nski}
   \address{Instytut Matematyki,
Uniwersytet Jagiello\'nski, ul.\ \L ojasiewicza 6,
PL-30348 Kra\-k\'ow, Poland}
\email{Zenon.Jablonski@im.uj.edu.pl}
   \author[I.\ B.\ Jung]{Il Bong Jung}
   \address{Department of Mathematics,
Kyungpook National University, Daegu 702-701, Korea}
   \email{ibjung@knu.ac.kr}
   \author[J.\ Stochel]{Jan Stochel}
\address{Instytut Matematyki, Uniwersytet
Jagiello\'nski, ul.\ \L ojasiewicza 6, PL-30348
Kra\-k\'ow, Poland} \email{Jan.Stochel@im.uj.edu.pl}
   \thanks{The research of the second and fourth
authors was supported by the National Science Center
(NCN) Grant OPUS No.\ DEC-2021/43/B/ST1/01651. The
research of the third author was supported by Basic
Science Research Program through the National Research
Foundation of Korea (NRF) funded by the Ministry of
Education (NRF-2021R111A1A01043569).}
   \subjclass[2010]{Primary 47A08 Secondary 47B20}
\keywords{Stability, upper triangular $2\times 2$
block matrix, B-operator, subnormal operator}
   \begin{abstract}
The B-operators (abbreviation for Brownian-type
operators) are upper triangular $2\times 2$ block
matrix operators that satisfy certain algebraic
constraints. The purpose of this paper is to
characterize the weak, the strong and the uniform
stability of B-operators, respectively. This is
achieved by giving equivalent conditions for the
convergence of powers of a B-operator in each of the
corresponding topologies. A more subtle
characterization is obtained for B-operators with
subnormal $(2,2)$ entry. The issue of the strong
stability of the adjoint of a B-operator is also
discussed.
   \end{abstract}
   \maketitle

   \section{Introduction}
The stability of Hilbert (or Banach) space operators,
which can be thought of as a discrete version of the
stability of $C_0$-semigroups of operators, means that
the power sequence of a given operator converges to
the zero operator in one of the various possible
topologies. Among the topologies of interest to us are
the operator norm topology (uniform stability), the
strong operator topology (strong stability) and the
weak operator topology (weak stability). Stability in
its various aspects is related to model theory
\cite{Kub97}, the invariant subspace problem
\cite{Rad-Ros03}, supercyclicity \cite{Kub16},
ergodicity \cite{Eis10}, etc. Uniform and strong
stability is now well understood. However, this is not
the case for weak stability. Hence, very few results
on weak stability are known (see e.g.,
\cite[Sec.~II.3]{Eis10}).

   The B-operators introduced in \cite{C-J-J-S21}
under the name ``Brownian-type operators'' generalize
the Brownian unitaries \cite{Ag90}, the Brownian
isometries \cite{Ag-St95-6} and the quasi-Brownian
isometries \cite{Maj,C-J-J-S21}. The B-operators are
upper triangular $2\times 2$ block matrix operators
that satisfy certain algebraic constraints (see
Definition~\ref{defq}). It is worth mentioning that
upper triangular $2\times 2$ block matrices appear in
various parts of operator theory and functional
analysis (e.g., see
\cite{Fog64,Do-Pe72,Ag-St95-6,Pi97,WLee00,
Cur-Lee02,Mis17,JKP18,JKP19,C-J-J-S22}).

Our main goal in this paper is to give criteria for
the convergence of powers of a B-operator relative to
three topologies, namely the weak operator topology
({\sc wot}), the strong operator topology ({\sc sot})
and the operator norm topology (see
Theorems~\ref{puwsb} and \ref{niekj}). As a
consequence, we obtain the complete characterizations
of the stability of B-operators in terms of their
entries relative to any of these topologies (see
Corollaries~\ref{wstib} and \ref{niekj++}). It turns
out that there are no strongly stable B-operators.
However, there are many weakly stable B-operators. We
refer the reader to Section~\ref{Sec.5} for some
illustrative examples. We also pay special attention
to the case of B-operators with subnormal $(2,2)$
entry, called B-subnormal operators (see
Theorem~\ref{puwsa} and Corollary~\ref{strbli}). The
proofs of the main results are given in
Section~\ref{SuS2.2}, while the necessary preparatory
results are presented in Section~\ref{Sec.3}. In
Section~ \ref{Sec.4.5}, we discuss the question of
strong stability of the adjoint of a B-operator.

We begin by providing the necessary concepts and
facts. In what follows, $\cbb$ stands for the set of
complex numbers. We write $\tbb:=\{z\in \cbb \colon
|z|=1\}$ and $\mathbb{D}:=\{z\in \cbb \colon |z|<1\}$.
Given two (complex) Hilbert spaces $\hh$ and $\kk,$ we
denote by $\ogr{\hh,\kk}$ the Banach space of all
bounded linear operators from $\hh$ to $\kk$. The
kernel, the range and the modulus of an operator $T
\in \ogr{\hh,\kk}$ are denoted by $\jd{T}$, $\ob T$
and $|T|,$ respectively. We regard
$\ogr{\hh}:=\ogr{\hh,\hh}$ as a $C^*$-algebra. The
identity operator on $\hh$ is denoted by $I$. If $T\in
\ogr{\hh}$, then $r(T)$ stands for the spectral radius
of $T$. Recall that an operator $T\in \ogr{\hh}$ is
said to be {\em weakly}, {\em strongly} or {\em
uniformly stable} if the power sequence
$\{T^n\}_{n=1}^{\infty}$ converges to $0$ in the {\sc
wot}, the {\sc sot} or the operator norm topology,
respectively. We refer the reader to
\cite{Eis10,Kub97} for more information on this topic.
We say that $T\in \ogr{\hh}$ is {\em subnormal} if
there exist a Hilbert space $\kk$ and a normal
operator $N\in \ogr{\kk}$, called a {\em normal
extension} of $T$, such that $\hh\subseteq \kk$
(isometric embedding) and $Th = Nh$ for all $h \in
\hh$. A normal extension $N\in \ogr{\kk}$ of $T$ is
said to be {\em minimal} if $\kk$ is the only closed
subspace of $\kk$ containing $\hh$ and reducing $N$.
We call an operator $T\in \ogr{\hh}$ {\em hyponormal}
if $TT^* \Le T^*T$. We say that $T\in \ogr{\hh}$ is
{\em quasinormal} if $TT^*T=T^*TT$. By
\cite[Propositions~II.1.7 and II.4.2]{Co91}, we have
$\mathscr{U} \subseteq \mathscr{I} \subseteq
\mathscr{Q}$ and $\mathscr{U} \subseteq \mathscr{N}
\subseteq \mathscr{Q} \subseteq \mathscr{S} \subseteq
\mathscr{H}$, where $\mathscr{U}$, $\mathscr{I}$,
$\mathscr{N}$, $\mathscr{Q}$, $\mathscr{S}$ and
$\mathscr{H}$ denote the classes of unitary,
isometric, normal, quasinormal, subnormal and
hyponormal operators, respectively.

We are now ready to recall the concept of a
B-operator.
   \begin{dfn}[{\cite{C-J-J-S21}}] \label{defq}
   We say that $T\in \ogr{\hh}$ is a B-operator if it
has the block matrix form $T=\big[\begin{smallmatrix} V & E \\
0 & X \end{smallmatrix}\big]$ with respect to an
orthogonal decomposition $\hh=\hh_1 \oplus \hh_2$,
where the spaces $\hh_1$ and $\hh_2$ are nonzero and
the operators $V\in \ogr{\hh_1},$ $E\in
\ogr{\hh_2,\hh_1}$ and $X\in \ogr{\hh_2}$ satisfy the
following conditions{\em :}
   \allowdisplaybreaks
   \begin{gather} \notag
\text{$V$ is an isometry, i.e., $V^*V=I,$}
   \\  \label{gqb-2}
\text{$V^*E=0$, i.e., $\ob{V}\perp \ob{E}$,}
   \\ \label{gqb-3}
XE^*E=E^*EX.
   \end{gather}
If this is the case, we say that $T=\big[\begin{smallmatrix} V & E \\
0 & X \end{smallmatrix}\big]$ is a B-operator relative
to $\hh_1\oplus \hh_2$. If $E= 0$, then $T$ is called
block-diagonal. If $X$ is unitary $($resp., isometric,
normal, quasinormal, subnormal, hyponormal, etc.$)$,
then $T$ is called B-unitary $($resp., B-isometric,
B-normal, B-quasinormal, B-subnormal, B-hyponormal,
etc.$)$ relative to $\hh_1\oplus \hh_2$. If
$\mathscr{X}\subseteq \ogr{\hh_2}$ is a class of
operators, then the set of all B-operators of the form
$\big[\begin{smallmatrix} W & F \\
0 & X \end{smallmatrix}\big]$ relative to $\hh_1\oplus
\hh_2$, where $X\in \mathscr{X}$, is denoted by
$\mathscr{X}_{\hh_1,\hh_2}$.
   \end{dfn}
The reader should be aware that it can happen that a
B-operator has two different matrix forms. Namely,
there exists an operator which is B-quasinormal with
respect to two different orthogonal decompositions
(see \cite[Example~7.3]{C-J-J-S21}).

Observe that by the square root theorem (see
\cite[Theorem~2.4.4]{Sim-4}), the condition
\eqref{gqb-3} is equivalent to $X|E|=|E|X$, which
yields the following.
   \begin{align} \label{reotp}
   \begin{minipage}{67ex}
{\em If $T=\big[\begin{smallmatrix} V & E \\
0 & X \end{smallmatrix}\big]$ is a B-operator relative
to $\hh_1\oplus \hh_2$, then the spaces
$\hh_{21}:=\jd{E}$ and $\hh_{22}:=\overline{\ob{|E|}}$
reduce $X$, and $\hh_2=\hh_{21}\oplus \hh_{22}$.}
   \end{minipage}
   \end{align}

It is worth mentioning that the concept of the
B-operator is a far-reaching generalization of the
Brownian isometry \cite{Ag-St95-6} and the
quasi-Brownian isometry \cite{Maj,A-C-J-S19}, which
correspond to B-unitary and B-isometric operators,
respectively (see also \cite[Theorem~4.1]{A-C-J-S19}).
In other words, the Agler-Stankus concept of Brownian
isometry does not coincide with the notion of
B-isometry. On the other hand, the Agler-Stankus class
of Brownian unitaries forms a proper subclass of
B-unitaries.
   \section{Main results} Before stating the main results of
this paper, let us recall that if $V\in \ogr{\hh}$ is an
isometry, then the space
   \begin{align*}
\hh_{\mathrm{u}}:=\bigcap_{n=0}^{\infty} \ob{V^n}
   \end{align*}
reduces $V$ to a unitary operator, which is called the {\em
unitary part} of $V$ (see \cite[Theorem~I.1.1]{SF70}). We
begin with the case of general B-operators.
   \begin{thm} \label{puwsb}
Suppose that $T = \big[\begin{smallmatrix} V & E \\
0 & X \end{smallmatrix}\big]$ is a B-operator relative
to $\hh_1\oplus \hh_2$. Then the following conditions
are equivalent{\em :}
   \begin{enumerate}
   \item[(i)] $\{T^n\}_{n=1}^{\infty}$ is
{\sc wot}-convergent,
   \item[(ii)] $\{U^n\}_{n=1}^{\infty}$
and $\{X^n\}_{n=1}^{\infty}$ are {\sc wot}-convergent and
$\sup_{n\Ge 1} \|E_n\| < \infty$, where $U\in
\ogr{\hh_{\mathrm{1u}}}$ is the unitary part of $V$ and
$\{E_n\}_{n=1}^{\infty}\subseteq \ogr{\hh_2,\hh_1}$ is
given~by\footnote{The sequence $\{E_n\}_{n=1}^{\infty}$ is
related to the powers of $\big[\begin{smallmatrix} V & E \\
0 & X \end{smallmatrix}\big]$ (see Lemma~\ref{xyzzyx}).}
   \begin{align} \label{syru}
E_n = \sum_{j=0}^{n-1} V^jEX^{n-1-j}, \quad n\Ge 1.
   \end{align}
   \end{enumerate}
Moreover, if {\em (ii)} holds, $P:=\text{\sc
(wot)}\lim_{n\to\infty} U^n$ and $A:=\text{\sc
(wot)}\lim_{n\to\infty} X^n$, then $P$ is the
orthogonal projection of $\hh_{\mathrm{1u}}$ onto
$\jd{I-U}$ and
   \begin{align} \label{mytrfwb}
\textrm{$(${\sc wot}$)$} \lim_{n\to \infty} T^n =
\begin{bmatrix} P \oplus 0 & 0 \\ 0 & A
\end{bmatrix}.
   \end{align}
Further, if $\{U^n\}_{n=1}^{\infty}$ is {\sc
wot}-convergent and $r(X) < 1$, then {\em (ii)} holds
and $A=0$.
   \end{thm}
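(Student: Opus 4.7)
The starting point is the matrix-power identity $T^n=\bigl[\begin{smallmatrix} V^n & E_n \\ 0 & X^n \end{smallmatrix}\bigr]$, which the referenced Lemma~\ref{xyzzyx} supplies and which explains why the sequence $\{E_n\}$ appears in the statement. Combined with the Wold decomposition $V=U\oplus S$ on $\hh_1=\hh_{\mathrm{1u}}\oplus\hh_{\mathrm{1s}}$, this gives $V^n=U^n\oplus S^n$; and since $S^{*n}\to 0$ in {\sc sot} (as $S$ is a shift), the adjoint relation yields $S^n\to 0$ in {\sc wot}. Hence the {\sc wot}-convergence of $\{V^n\}$ is equivalent to that of $\{U^n\}$, with $\text{\sc(wot)}\lim V^n=P\oplus 0$ whenever $P=\text{\sc(wot)}\lim U^n$.

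For (i)$\Rightarrow$(ii), testing the matrix form of $T^n$ against vectors supported in $\hh_1$, in $\hh_2$, and in off-diagonal positions immediately yields {\sc wot}-convergence of $\{V^n\}$, $\{X^n\}$ and $\{E_n\}$; the uniform bound $\sup_n\|E_n\|<\infty$ then follows from two applications of the Banach--Steinhaus principle.

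The substantive direction is (ii)$\Rightarrow$(i), and the main obstacle is to show that $E_n\to 0$ in {\sc wot} (so that the off-diagonal block of the limit in \eqref{mytrfwb} vanishes). The key structural input is $V^*E=0$ from \eqref{gqb-2}, which places $\ob E$ inside the wandering subspace $\lcal:=\jd{V^*}$ of the shift part $S$. After the re-indexing $E_n=\sum_{k=0}^{n-1}V^{n-1-k}EX^k$, the summands have ranges in the pairwise orthogonal Wold subspaces $V^{n-1-k}\lcal$ of $\hh_{\mathrm{1s}}$, producing the Pythagorean identity
\[
\|E_n x\|^2 \,=\, \sum_{k=0}^{n-1}\|EX^k x\|^2,\qquad x\in\hh_2.
\]
The hypothesis $\sup_n\|E_n\|<\infty$ therefore forces $\sum_{k=0}^\infty\|EX^k x\|^2<\infty$ for every $x\in\hh_2$; in particular $EX^k x\to 0$. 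To deduce $\langle E_n x,y\rangle\to 0$ for $y\in\hh_1$, split $y=y_{\mathrm u}+y_{\mathrm s}$ along $\hh_{\mathrm{1u}}\oplus\hh_{\mathrm{1s}}$: the $y_{\mathrm u}$-contribution vanishes because $\ob{E_n}\subseteq\hh_{\mathrm{1s}}$, and expanding $y_{\mathrm s}=\sum_m V^m\ell_m$ with $\ell_m\in\lcal$ and using Wold orthogonality collapses the inner product to $\sum_{m=0}^{n-1}\langle EX^{n-1-m}x,\ell_m\rangle$. A head/tail split at a fixed cutoff $M$ then finishes things: the head is a finite sum of terms each tending to $0$ (for fixed $m$, $EX^{n-1-m}x\to 0$ as $n\to\infty$), while the tail is controlled by Cauchy--Schwarz using the Pythagorean identity together with the tail summability of $\sum_m\|\ell_m\|^2=\|y_{\mathrm s}\|^2$.

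Identification of $P$ comes by passing to the {\sc wot}-limit in $U\cdot U^n=U^{n+1}$ to obtain $UP=P$, iterating to $U^nP=P$ and then to $P^2=P$; since $\|P\|\Le 1$, $P$ is automatically an orthogonal projection, and $\ob P=\jd{I-U}$ follows from the two inclusions ($Ux=x\Rightarrow U^nx=x\Rightarrow Px=x$, and $x\in\ob P\Rightarrow Ux=UPy=Py=x$). The closing assertion under $r(X)<1$ is immediate from Gelfand's spectral-radius formula: $\|X^n\|\Le C\rho^n$ for some $\rho\in(0,1)$ gives $X^n\to 0$ in norm (so $A=0$), and the elementary estimate $\|E_n\|\Le\|E\|\sum_{k=0}^{n-1}\|X^k\|$ is bounded by a summable geometric series, yielding $\sup_n\|E_n\|<\infty$.
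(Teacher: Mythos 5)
Your proposal is correct, and its skeleton is the paper's: the block formula of Lemma~\ref{xyzzyx} reduces everything to the entry sequences, the Wold decomposition disposes of $V$, and the whole weight of the theorem sits on showing that $\sup_{n\Ge 1}\|E_n\|<\infty$ forces $E_n\to 0$ in the {\sc wot}. The differences are in how the auxiliary steps are executed. The paper delegates the key step to Lemma~\ref{cuscik}: there the square-summability of $\{\|X^j|E|h\|\}_j$ is extracted from Lemma~\ref{xyzzyx}(iii) via the commutation $X|E|=|E|X$ coming from \eqref{gqb-3}, and the conclusion is reached by a density-plus-uniform-boundedness argument over the wandering subspaces $V^k(\jd{V^*})$; you instead use only \eqref{gqb-2} to get the Pythagorean identity $\|E_nx\|^2=\sum_{k=0}^{n-1}\|EX^kx\|^2$ (no appeal to \eqref{gqb-3} at this point) and finish with an explicit head/tail Cauchy--Schwarz estimate against the Wold expansion of the test vector, which is self-contained and slightly more economical in hypotheses. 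Similarly, you identify $P$ by the elementary algebra $UP=P$, $P^2=P$, $\|P\|\Le 1$, where the paper invokes von Neumann's ergodic theorem through Lemma~\ref{ergyt}; and for the final assertion you bound $\|E_n\|\Le\|E\|\sum_{k=0}^{n-1}\|X^k\|$ by a geometric series, where the paper passes through $r(X|_{\hh_{22}})<1$ and the ``moreover'' part of Lemma~\ref{cuscik}. What the paper's factorization buys is reusability: the intermediate conclusion \eqref{insss} of Lemma~\ref{cuscik} is needed again in Lemma~\ref{nubt} for the B-subnormal case, and Lemma~\ref{ergyt} is reused elsewhere; what your version buys is a shorter, fully self-contained proof of this particular theorem.
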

Using Theorem~\ref{puwsb}, we can characterize the weak
stability of B-operators.
   \begin{cor} \label{wstib}
Suppose that $T = \big[\begin{smallmatrix} V & E \\
0 & X \end{smallmatrix}\big]$ is a B-operator relative
to $\hh_1\oplus \hh_2$. Then the following conditions
are equivalent{\em :}
   \begin{enumerate}
   \item[(i)] $T$ is weakly stable,
   \item[(ii)] $U$ and
$X$ are weakly stable and $\sup_{n\Ge 1} \|E_n\| < \infty$,
where $U\in \ogr{\hh_{\mathrm{1u}}}$ is the unitary part of
$V$ and $\{E_n\}_{n=1}^{\infty}$ is as in \eqref{syru}.
   \end{enumerate}
   \end{cor}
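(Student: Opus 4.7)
The plan is to deduce this corollary directly from Theorem~\ref{puwsb}, since weak stability of $T$ is nothing but {\sc wot}-convergence of $\{T^n\}_{n=1}^{\infty}$ to the zero operator, and the theorem already characterizes {\sc wot}-convergence of the power sequence of a B-operator and identifies its limit via \eqref{mytrfwb}.

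For the forward direction (i)$\Rightarrow$(ii), I would invoke Theorem~\ref{puwsb} to extract, from the {\sc wot}-convergence of $\{T^n\}$ (guaranteed by weak stability), both the {\sc wot}-convergence of $\{U^n\}$ and $\{X^n\}$ and the uniform bound $\sup_{n\Ge 1}\|E_n\|<\infty$. Denoting the respective {\sc wot}-limits by $P$ and $A$, the formula \eqref{mytrfwb} identifies $\lim T^n$ with
\[
\begin{bmatrix} P \oplus 0 & 0 \\ 0 & A \end{bmatrix},
\]
and equating this to $0$ forces $P=0$ and $A=0$, which says exactly that $U$ and $X$ are weakly stable.

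For the reverse direction (ii)$\Rightarrow$(i), I would again appeal to Theorem~\ref{puwsb}: weak stability of $U$ and $X$ in particular says that $\{U^n\}$ and $\{X^n\}$ are {\sc wot}-convergent (to $0$), and together with $\sup_{n\Ge 1}\|E_n\|<\infty$ the theorem guarantees {\sc wot}-convergence of $\{T^n\}$ whose limit, read off from \eqref{mytrfwb} with $P=A=0$, is the zero operator. Hence $T$ is weakly stable.

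Since the argument is essentially a direct unpacking of Theorem~\ref{puwsb}, there is no substantial obstacle; the only point to verify is the elementary observation that the block-matrix limit in \eqref{mytrfwb} vanishes if and only if both diagonal blocks $P\oplus 0$ and $A$ vanish, which in turn is equivalent to the weak stability of $U$ and of $X$ (using that $P$ and $A$ are by construction the {\sc wot}-limits of $\{U^n\}$ and $\{X^n\}$).
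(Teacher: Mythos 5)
Your proposal is correct and is exactly the intended argument: the paper offers no separate proof of Corollary~\ref{wstib}, presenting it as a direct consequence of Theorem~\ref{puwsb}, and your unpacking (weak stability as {\sc wot}-convergence to $0$, plus reading off from \eqref{mytrfwb} that the limit vanishes precisely when $P=0$ and $A=0$, i.e., when $U$ and $X$ are weakly stable) is precisely that deduction.
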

The {\sc sot}-convergence of the power sequence
$\{T^n\}_{n=1}^{\infty}$ of a B-operator $T$ is described by
the following theorem.
   \begin{thm} \label{niekj}
Suppose that $T = \big[\begin{smallmatrix} V & E \\
0 & X \end{smallmatrix}\big]$ is a B-operator relative
to $\hh_1\oplus \hh_2$. Then the following conditions
are equivalent{\em :}
   \begin{enumerate}
   \item[(i)] $\{T^n\}_{n=1}^{\infty}$ is {\sc
sot}-convergent,
   \item[(ii)] $V=I$, $E=0$ and $\{X^n\}_{n=1}^{\infty}$ is {\sc
sot}-convergent.
   \end{enumerate}
Moreover, if {\em (ii)} holds, then
   \begin{align} \label{ctrex}
L:=\text{\sc (sot)} \lim_{n\to\infty} T^n =
\begin{bmatrix} I & 0 \\ 0 & A
\end{bmatrix},
   \end{align}
where $A:=\text{\sc (sot)}\lim_{n\to\infty} X^n$.
Furthermore, if $T\in \gsbh$, then $L\in \gsbh$.
   \end{thm}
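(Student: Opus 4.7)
For (ii) $\Rightarrow$ (i) and the formula \eqref{ctrex}, one simply observes that with $V=I$ and $E=0$ the operator $T$ reduces to the block-diagonal $I_{\hh_1}\oplus X$, whose powers are $I_{\hh_1}\oplus X^n$ and converge in SOT to $I_{\hh_1}\oplus A$. The substance is (i) $\Rightarrow$ (ii). My plan is to use the upper-triangular block form $T^n = \big[\begin{smallmatrix} V^n & E_n \\ 0 & X^n \end{smallmatrix}\big]$ supplied by Lemma~\ref{xyzzyx} and test $\{T^n\}$ against vectors of the form $h_1\oplus 0$ and $0\oplus h_2$ to extract SOT-convergence of $\{V^n\}$ and $\{X^n\}$ separately. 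The crux is then forcing $V=I$ from the SOT-convergence of the isometric powers $\{V^n\}$: since $V$ is an isometry,
\[
\|V^{n+1}h - V^n h\| \;=\; \|V^n(V-I)h\| \;=\; \|(V-I)h\| \quad\text{for every } h\in\hh_1,
\]
and the left-hand side tends to $0$ as $\{V^n h\}$ is norm-Cauchy; hence $V=I$. Feeding this into the defining relation $V^*E=0$ of Definition~\ref{defq} collapses it to $E=0$, and \eqref{ctrex} drops out of the resulting block-diagonal shape.

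For the final assertion, once (ii) is in hand, $L = I_{\hh_1}\oplus A$ is block-diagonal, so the B-operator axioms of Definition~\ref{defq} relative to $\hh_1\oplus\hh_2$ are satisfied automatically; only subnormality of the $(2,2)$-entry $A$ needs proof. Each $X^n$ is subnormal as a power of the subnormal $X$, and $\sup_{n}\|X^n\|<\infty$ by the Banach--Steinhaus theorem applied to the SOT-convergent sequence. I would then invoke Bram's positivity characterisation of subnormality: because multiplication is separately SOT-continuous on norm-bounded sets, $(X^n)^k\to A^k$ in SOT for every fixed $k$, so the Bram positivity inequalities for $X^n$ pass to the SOT-limit and yield subnormality of $A$. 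Consequently $L\in\gsbh$.

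The only genuinely substantive point is the rigidity step forcing $V=I$; everything else is routine block algebra. The place requiring most care is the limit passage in Bram's inequalities, since joint SOT-continuity of multiplication fails in general and one must explicitly use the uniform bound on $\{X^n\}$ to advance from SOT-convergence of $\{X^n\}$ to SOT-convergence of each fixed power $\{(X^n)^k\}$.
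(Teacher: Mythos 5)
Your proposal is correct, but it takes a genuinely different route from the paper at both substantive points. For (i)$\Rightarrow$(ii) the paper extracts the {\sc sot}-convergence of $\{V^n\}$ and $\{X^n\}$ from the block form just as you do, but then forces $V=I$ by a longer chain: the Wold decomposition $V=U\oplus W$, weak (hence here strong) stability of the completely non-unitary part $W$, the observation that an {\sc sot}-limit of isometries is an isometry (which kills $W$), and Lemma~\ref{ergyt} (via von Neumann's ergodic theorem) to identify $\lim_n V^n$ as the orthogonal projection onto $\jd{I-V}$, which, being an isometry, must be $I$. Your two-line rigidity argument $\|(V-I)h\|=\|V^n(V-I)h\|=\|V^{n+1}h-V^nh\|\to 0$ reaches the same conclusion more elementarily (indeed it proves directly the remark following the theorem, that an isometry with {\sc sot}-convergent powers equals $I$); what the paper's route buys is that it reuses machinery (Wold decomposition, Lemma~\ref{ergyt}) already needed for Theorem~\ref{puwsb}. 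For the final assertion, the paper simply cites Bishop's theorem that the set of subnormal operators is {\sc sot}-closed, whereas you reprove the needed sequential case by hand: powers of the subnormal $X$ are subnormal, $\sup_n\|X^n\|<\infty$ by Banach--Steinhaus, $(X^n)^k\to A^k$ in {\sc sot} for each fixed $k$, and Bram's positivity inequalities pass to the limit. That passage is sound as you set it up, with one wording caveat: what is actually needed (and what you invoke at the end) is joint sequential {\sc sot}-continuity of multiplication on norm-bounded sets, not separate continuity, which holds without any bound but would not yield $(X^n)^k\to A^k$; incidentally, since $(X^n)^k=X^{nk}\to A$, your computation also shows $A^k=A$, consistent with the limit you take. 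Net effect: your argument is self-contained but longer; the paper's is shorter by citation.
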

   \begin{cor} \label{niekj++}
B-operators are neither strongly nor uniformly stable.
   \end{cor}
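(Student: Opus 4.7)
The plan is to observe that this corollary is a direct consequence of Theorem~\ref{niekj}, exploiting the nontriviality of the component space $\hh_1$. Since uniform convergence implies strong convergence, it suffices to rule out strong stability; the uniform case then follows for free.

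First I would argue by contradiction: suppose a B-operator $T = \big[\begin{smallmatrix} V & E \\ 0 & X \end{smallmatrix}\big]$ relative to $\hh_1\oplus \hh_2$ were strongly stable, so that $\{T^n\}_{n=1}^{\infty}$ is {\sc sot}-convergent to $0$. By the implication (i)$\Rightarrow$(ii) of Theorem~\ref{niekj}, one has $V=I$, $E=0$, and $\{X^n\}_{n=1}^{\infty}$ is {\sc sot}-convergent to some $A \in \ogr{\hh_2}$. Then formula~\eqref{ctrex} identifies the {\sc sot}-limit as
\begin{align*}
L = \begin{bmatrix} I & 0 \\ 0 & A \end{bmatrix}
\end{align*}
with the top-left block equal to the identity on $\hh_1$.

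The contradiction is now immediate: since $\hh_1$ is required to be nonzero in Definition~\ref{defq}, the identity operator on $\hh_1$ is not the zero operator, hence $L\neq 0$. This rules out strong stability. Because $\|T^n\|\to 0$ would force {\sc sot}-convergence to $0$, uniform stability is excluded as well.

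The only subtlety (and it is a very mild one) is remembering the standing convention that $\hh_1$ and $\hh_2$ are both nonzero; without this, block-diagonal cases like $T = 0\oplus X$ with $X$ strongly stable would formally be counterexamples. Beyond that, the corollary is a one-line consequence of Theorem~\ref{niekj} and requires no further machinery.
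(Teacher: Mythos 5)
Your argument is correct and is exactly the intended derivation: Theorem~\ref{niekj} forces any {\sc sot}-limit of $\{T^n\}_{n=1}^{\infty}$ to have the identity on the nonzero space $\hh_1$ in its $(1,1)$ corner, so the limit cannot be $0$, and uniform stability is excluded because it implies strong stability. This matches the paper's (implicit) one-line proof, so nothing further is needed.
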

In the case of a B-subnormal operator, we obtain a
more subtle characterization of the {\sc
wot}-convergence of the power sequence
$\{T^n\}_{n=1}^{\infty}$.
   \begin{thm} \label{puwsa}
Let $T = \big[\begin{smallmatrix} V & E \\
0 & X \end{smallmatrix}\big]$ be a B-subnormal
operator relative to $\hh_1\oplus \hh_2$. Then the
following statements are equivalent{\em :}
   \begin{enumerate}
   \item[(i)] $\{T^n\}_{n=1}^{\infty}$ is
{\sc wot}-convergent,
   \item[(ii)] $\{U^n\}_{n=1}^{\infty}$
and $\{(X|_{\hh_{21}})^n\}_{n=1}^{\infty}$ are {\sc
wot}-convergent, $\sup_{n\Ge 1} \|E_n\| < \infty$ and
$\|X\|\Le 1$, where $U\in \ogr{\hh_{\mathrm{1u}}}$ is the
unitary part of $V$, $\{E_n\}_{n=1}^{\infty}$ is as in
\eqref{syru} and $\hh_{21}$ is as in \eqref{reotp}.
   \end{enumerate}
Moreover, if {\em (ii)} holds and $B:=\text{\sc
(wot)}\lim_{n\to\infty} (X|_{\hh_{21}})^n$, then
\eqref{mytrfwb} holds with $A=B \oplus 0$.
Furthermore, if $E$ is injective, then
$\hh_{21}=\{0\}$ and {\em (i)} is equivalent to
   \begin{enumerate}
   \item[(iii)] $\{U^n\}_{n=1}^{\infty}$
is {\sc wot}-convergent, $\sup_{n\Ge 1} \|E_n\| <
\infty$ and $\|X\|\Le 1$.
   \end{enumerate}
   \end{thm}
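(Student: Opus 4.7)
The plan is to use Theorem~\ref{puwsb} as the main engine and then exploit the additional structure furnished by subnormality of $X$: the decomposition $\hh_2=\hh_{21}\oplus\hh_{22}$ recalled in \eqref{reotp} (both summands reduce $X$), together with the fact that subnormal operators, being hyponormal, are normaloid, i.e., $\|X\|=r(X)$. The direction (i) $\Rightarrow$ (ii) is then essentially immediate: Theorem~\ref{puwsb} yields {\sc wot}-convergence of $\{U^n\}_{n=1}^{\infty}$ and $\{X^n\}_{n=1}^{\infty}$ together with $\sup_{n\Ge 1}\|E_n\|<\infty$; boundedness of $\{X^n\}_{n=1}^{\infty}$ combined with $\|X\|=r(X)$ forces $\|X\|\Le 1$; and {\sc wot}-convergence of $\{(X|_{\hh_{21}})^n\}_{n=1}^{\infty}$ is automatic since $\hh_{21}$ reduces $X$.

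The converse (ii) $\Rightarrow$ (i) is the substantive part. The task is to promote the hypothesis to {\sc wot}-convergence of $\{X^n\}_{n=1}^{\infty}$ on the full space $\hh_2$, so that Theorem~\ref{puwsb} applies; in fact, I will show that $\{(X|_{\hh_{22}})^n\}_{n=1}^{\infty}$ tends to $0$ in {\sc sot}. Using $V^*E=0$ (so that $(V^*)^{k-j}E=0$ for $k>j$) and the fact that $V$ is an isometry, the summands $V^jEX^{n-1-j}h$ in~\eqref{syru} have pairwise orthogonal ranges, which yields
\begin{align*}
\|E_n h\|^2 \;=\; \sum_{j=0}^{n-1}\|EX^{n-1-j}h\|^2, \qquad h\in\hh_2.
\end{align*}
Invoking the equivalent form $X|E|=|E|X$ of~\eqref{gqb-3} and reindexing, this rewrites as $\sum_{j=0}^{n-1}\|X^j|E|h\|^2$, which is bounded uniformly in $n$. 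Hence $X^j|E|h\to 0$ for every $h\in\hh_2$, so $X^j\to 0$ strongly on the dense subspace $\ob{|E|}\subseteq\hh_{22}$; together with the norm bound $\|X\|\Le 1$, a standard density argument extends this to strong convergence to $0$ on all of $\hh_{22}$. Combined with the assumed {\sc wot}-convergence of $\{(X|_{\hh_{21}})^n\}_{n=1}^{\infty}$ to $B$, this gives $X^n\to B\oplus 0$ in {\sc wot}, after which Theorem~\ref{puwsb} delivers both (i) and the formula~\eqref{mytrfwb} with $A=B\oplus 0$.

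For the last assertion, injectivity of $E$ forces $\hh_{21}=\jd{E}=\{0\}$, making the condition on $\{(X|_{\hh_{21}})^n\}_{n=1}^{\infty}$ in (ii) vacuous and reducing (ii) to (iii). The one nontrivial step in the whole argument is the passage from the square-summability of $\|X^j|E|h\|^2$ to strong stability of $X|_{\hh_{22}}$; it rests on the three ingredients $X|E|=|E|X$, density of $\ob{|E|}$ in $\hh_{22}$, and the norm bound $\|X\|\Le 1$ extracted via the normaloid property of the subnormal operator $X$.
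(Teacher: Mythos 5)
Your proposal is correct, and its key step takes a genuinely different route from the paper. The paper proves (ii)$\Rightarrow$(i) through Lemma~\ref{nubt}, i.e., through the semispectral measure $F$ of the subnormal operator $X$: from the square-summability estimate (the paper's \eqref{insss}) and a cited identity $\lim_n\|X^nf\|^2=\is{F(\tbb)f}{f}$ it deduces $F(\tbb)|_{\hh_{22}}=0$, and then weak stability of $X|_{\hh_{22}}$ follows from $\is{(X|_{\hh_{22}})^nf}{f}=\int_{\dbb}z^n\is{F(\D z)f}{f}$ and dominated convergence (this needs a minimal normal extension, Lemma~\ref{sspor}, and $\|X\|\Le 1$ to locate the support in $\dbbc$). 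You bypass all of that: from $\sup_n\|E_n\|<\infty$ and $X|E|=|E|X$ you get $\sum_j\|X^j|E|h\|^2<\infty$ (exactly the paper's \eqref{insss}, which you rederive via the orthogonality of the ranges of the summands $V^jEX^{n-1-j}$ — a correct computation, equivalent to invoking Lemma~\ref{xyzzyx}(iii)), hence $\|X^ng\|\to 0$ on the dense subspace $\ob{|E|}$ of $\hh_{22}$, and the contraction bound $\|X\|\Le 1$ upgrades this to norm convergence $\|X^nf\|\to 0$ for all $f\in\hh_{22}$. This is more elementary, gives the strictly stronger conclusion that $X|_{\hh_{22}}$ is \emph{strongly} stable (the paper only records weak stability in Lemma~\ref{nubt}(ii)), and shows that subnormality is not needed for (ii)$\Rightarrow$(i) at all — it enters only in (i)$\Rightarrow$(ii), where, as in the paper's \eqref{wyudr}, the normaloid property converts power boundedness ($r(X)\Le1$) into $\|X\|\Le1$; your treatment of that direction, of the reduction $X^n=(X|_{\hh_{21}})^n\oplus(X|_{\hh_{22}})^n$ giving \eqref{mytrfwb} with $A=B\oplus0$, and of the "furthermore" part ($\hh_{21}=\jd{E}=\{0\}$ when $E$ is injective) matches the paper. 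What the paper's heavier route buys is the additional structural fact $F(\tbb)|_{\hh_{22}}=0$ about the semispectral measure, which is of independent interest but not needed for the theorem itself.
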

As a direct application of Theorem~\ref{puwsa}, we get the
following characterization of the weak stability of
B-subnormal operators (cf.\ Corollary~\ref{wstib}).
   \begin{cor}  \label{strbli}
Let $T = \big[\begin{smallmatrix} V & E \\
0 & X \end{smallmatrix}\big]$ be a B-subnormal
operator relative to $\hh_1\oplus \hh_2$. Then the
following conditions are equivalent{\em :}
   \begin{enumerate}
   \item[(i)] $T$ is weakly stable,
   \item[(ii)] $U$ and $X|_{\hh_{21}}$ are weakly stable, $\sup_{n\Ge 1}
\|E_n\| < \infty$ and $\|X\| \Le 1$, where $U\in
\ogr{\hh_{\mathrm{1u}}}$ is the unitary part of $V$,
$\{E_n\}_{n=1}^{\infty}$ is as in \eqref{syru} and $\hh_{21}$
is as in {\em \eqref{reotp}}.
   \end{enumerate}
Moreover, if $E$ is injective, then {\em (i)} is
equivalent to
   \begin{enumerate}
   \item[(iii)] $U$ is weakly stable, $\sup_{n\Ge 1}
\|E_n\| < \infty$ and $\|X\| \Le 1$.
   \end{enumerate}
   \end{cor}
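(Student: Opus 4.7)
The plan is to derive this corollary as an immediate consequence of Theorem~\ref{puwsa}, using the observation that weak stability of $T$ is the same as WOT-convergence of $\{T^n\}_{n=1}^{\infty}$ together with the requirement that the limit be zero. All the substantive work is already packaged into Theorem~\ref{puwsa}; the corollary is a specialization to the case where the WOT-limit happens to vanish.

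For (i)$\Rightarrow$(ii), assume $T$ is weakly stable. Then $\{T^n\}_{n=1}^{\infty}$ is WOT-convergent, so Theorem~\ref{puwsa}(ii) applies and delivers WOT-convergence of $\{U^n\}_{n=1}^{\infty}$ and $\{(X|_{\hh_{21}})^n\}_{n=1}^{\infty}$, the bound $\sup_{n\Ge 1}\|E_n\|<\infty$, and $\|X\|\Le 1$. Writing $P$ and $B$ for the respective WOT-limits of $\{U^n\}_{n=1}^{\infty}$ and $\{(X|_{\hh_{21}})^n\}_{n=1}^{\infty}$, the moreover clause of Theorem~\ref{puwsa} identifies the WOT-limit of $\{T^n\}_{n=1}^{\infty}$ with $\big[\begin{smallmatrix} P\oplus 0 & 0 \\ 0 & B\oplus 0 \end{smallmatrix}\big]$. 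Since this limit is $0$, I conclude $P=0$ and $B=0$, which is precisely the statement that $U$ and $X|_{\hh_{21}}$ are weakly stable. Conversely, if (ii) holds, weak stability of $U$ and $X|_{\hh_{21}}$ means that their power sequences are WOT-convergent with zero limits; combined with the remaining hypotheses on $\|E_n\|$ and $\|X\|$, condition (ii) of Theorem~\ref{puwsa} is verified, and the moreover clause, now evaluated at $P=0$ and $B=0$, yields $T^n\to 0$ in WOT.

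The injective case is immediate: if $E$ is injective, then $\hh_{21}=\jd{E}=\{0\}$ (as recorded in Theorem~\ref{puwsa}), so the condition on $X|_{\hh_{21}}$ in (ii) is vacuous and (ii) collapses to (iii); the equivalence (i)$\Leftrightarrow$(iii) follows at once. I do not anticipate any obstacle here, since the argument is a direct specialization of Theorem~\ref{puwsa} and requires only reading off the limit from the block-matrix formula.
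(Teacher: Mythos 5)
Your proposal is correct and follows essentially the same route the paper intends: the corollary is stated there as a direct application of Theorem~\ref{puwsa}, and your argument simply makes explicit that weak stability is {\sc wot}-convergence with zero limit, reads off the limit from \eqref{mytrfwb} with $A=B\oplus 0$ to force $P=0$ and $B=0$, and notes that $\hh_{21}=\{0\}$ when $E$ is injective. No gaps.
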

   \section{\label{Sec.3}Preparatory lemmas}
We begin by recalling the concept of the semispectral
measure of a subnormal operator. Let $T\in \ogr{\hh}$
be a subnormal operator and $G$ be the spectral
measure of a mini\-mal normal extension $N\in
\ogr{\kk}$ of $T$. Set $F(\varDelta) =
PG(\varDelta)|_{\hh}$ for a Borel subset $\varDelta$
of $\cbb$, where $P\in \ogr{\kk}$ is the orthogonal
projection of $\kk$ onto $\hh$. Then $F$ is a Borel
semispectral measure on $\cbb$ (i.e., $F(\cbb)=I$ and
$F$ is $\sigma$-additive in the weak operator
topology) called the {\em semispectral measure} of
$T$. The definition of $F$ is independent of the
choice of a minimal normal extension of $T$ and $F$ is
a unique Borel semispectral measure on $\cbb$ such
that
   \begin{align} \label{mymor}
T^{*n}T^m=\int_{\cbb} z^m \bar z^n F(\D z), \quad m,n \Ge 0.
   \end{align}
For more information on this, the reader is referred
to \cite[Section~3]{Ju-St08} and \cite[Appendix]{St92}
(see also \cite{Bi-So87,Co91}).

We now describe the semispectral measures of the
components of the orthogonal sum of two subnormal
operators.
   \begin{lem}\label{sspor}
Let $T\in \ogr{\hh}$ be a subnormal operator, $F$ be the
semispectral measure of $T$ and $\mcal_1,\mcal_2$ be closed
subspaces of $\hh$ reducing $T$ such that $\hh=\mcal_1\oplus
\mcal_2$. Then for $j=1,2$, $\mcal_j$ reduces $F$ and
$F(\cdot)|_{\mcal_j}$ is the semispectral measure of
$T|_{\mcal_j}$.
   \end{lem}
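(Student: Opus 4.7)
The plan is to exploit the moment characterisation \eqref{mymor} twice: first to show that each $\mcal_j$ reduces $F$, and then to identify $F(\cdot)|_{\mcal_j}$ as the semispectral measure of $T|_{\mcal_j}$. I would begin by noting that if $N \in \ogr{\kk}$ is a minimal normal extension of $T$, then for every $h \in \mcal_j$ we have $N h = T h = (T|_{\mcal_j}) h$, so each restriction $T|_{\mcal_j}$ is itself subnormal and hence admits a semispectral measure. This is the natural target for identification.

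To see that $\mcal_j$ reduces $F$, I would fix $h \in \mcal_1$ and $k \in \mcal_2$ and set $\mu_{h,k}(\varDelta) := \langle F(\varDelta) h, k\rangle$. Since $\mcal_1$ reduces $T$, both $T$ and $T^*$ leave $\mcal_1$ invariant, so $T^{*n} T^m h \in \mcal_1$ is orthogonal to $k$ for all $m, n \Ge 0$. Combined with \eqref{mymor}, this gives
\begin{align*}
\int_{\cbb} z^m \bar z^n \, \D \mu_{h,k}(z) = \langle T^{*n} T^m h, k\rangle = 0, \quad m,n \Ge 0.
\end{align*}
The scalar measure $\mu_{h,k}$ is supported in the compact set $\sigma(N)$, because $F(\varDelta)$ vanishes whenever $\varDelta \cap \sigma(N) = \varnothing$. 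By Stone--Weierstrass the polynomials in $z$ and $\bar z$ are dense in $C(\sigma(N))$, so the vanishing of the mixed moments forces $\mu_{h,k} \equiv 0$. Thus $F(\varDelta) h \perp k$ for every Borel $\varDelta$, and letting $k$ range over $\mcal_2$ yields $F(\varDelta) \mcal_1 \subseteq \mcal_1$; the symmetric argument treats $\mcal_2$.

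Having shown that each $\mcal_j$ reduces $F$, I would define $F_j(\varDelta) := F(\varDelta)|_{\mcal_j}$, which is manifestly a Borel semispectral measure on $\mcal_j$ with $F_j(\cbb) = I_{\mcal_j}$. For $h, k \in \mcal_j$ one then has
\begin{align*}
\langle (T|_{\mcal_j})^{*n} (T|_{\mcal_j})^m h, k\rangle = \langle T^{*n} T^m h, k\rangle = \int_{\cbb} z^m \bar z^n \, \langle F_j(\D z) h, k\rangle,
\end{align*}
and the uniqueness clause in the moment characterisation \eqref{mymor}, applied to the subnormal operator $T|_{\mcal_j}$, forces $F_j$ to be the semispectral measure of $T|_{\mcal_j}$. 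The only slightly delicate point is the Stone--Weierstrass step used to pass from vanishing moments to vanishing of $\mu_{h,k}$; once the compact support of $F$ inside $\sigma(N)$ is noted this is routine, and I do not anticipate any other real obstacle.
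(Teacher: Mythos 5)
Your proof is correct, but it takes a genuinely different route from the paper. The paper's argument decomposes the minimal normal extension: it invokes an external result (\cite[Lemma~3.4]{C-J-J-S22}) to write a minimal normal extension of $T=T_1\oplus T_2$ as $N=N_1\oplus N_2$ with $N_j$ a minimal normal extension of $T_j:=T|_{\mcal_j}$, and then computes directly from the definition of the semispectral measure that $F=F_1\oplus F_2$, where $F_j$ is the semispectral measure of $T_j$; the reduction of $F$ by $\mcal_j$ and the identification of $F(\cdot)|_{\mcal_j}$ come out simultaneously. You instead work intrinsically with the moment characterization \eqref{mymor}: since $\mcal_1$ reduces $T$, the mixed moments of the complex measure $\mu_{h,k}=\is{F(\cdot)h}{k}$ vanish for $h\in\mcal_1$, $k\in\mcal_2$; as $\mu_{h,k}$ is a finite (hence regular) complex Borel measure supported in the compact set $\sigma(N)$, Stone--Weierstrass and the Riesz representation theorem force $\mu_{h,k}=0$, so each $\mcal_j$ reduces every $F(\varDelta)$; then the uniqueness clause in the paper's preamble (the semispectral measure of a subnormal operator is the unique Borel semispectral measure satisfying \eqref{mymor}), applied to the subnormal operator $T|_{\mcal_j}$ and the compactly supported semispectral measure $F(\cdot)|_{\mcal_j}$, completes the identification. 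All the individual steps check out: $T^{*n}T^mh\in\mcal_1$ uses both invariances coming from reduction, $(T|_{\mcal_j})^{*}=T^{*}|_{\mcal_j}$ is available for the moment computation, and the inclusion $F(\varDelta)\mcal_1\subseteq\mcal_2^{\perp}=\mcal_1$ together with self-adjointness of $F(\varDelta)$ gives reduction. What each approach buys: the paper's proof is a short computation once the lifting lemma for minimal normal extensions of orthogonal sums is granted, whereas yours avoids that lemma entirely and relies only on the moment formula and its uniqueness statement (plus the compact support of $F$), at the cost of the measure-theoretic density argument you flagged as the delicate step.
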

   \begin{proof} Note that $T_j:=T|_{\mcal_j}$, $j=1,2$,
are subnormal operators and $T=T_1 \oplus T_2$.
Suppose that $N\in \ogr{\kk}$ is a minimal normal
extension of $T$ and $G$ is the spectral measure of
$N$. It follows from \cite[Lemma~3.4]{C-J-J-S22} that
$N=N_1\oplus N_2$, where $N_j\in \ogr{\kk_j}$ is the
minimal normal extension of $T_j$ for $j=1,2$. In
particular, $\kk=\kk_1 \oplus \kk_2$. For $j=1,2$, let
$F_j$ be the semispectral measure of $T_j$ and $G_j$
be the spectral measure of $N_j$. Then $G=G_1\oplus
G_2$. Hence, $F_1 \oplus F_2$ is a Borel semispectral
measure on $\cbb$ such that
   \allowdisplaybreaks
   \begin{align*}
\is{(F_1 \oplus F_2) (\varDelta)(f_1& \oplus f_2)}{f_1 \oplus
f_2} = \is{F_1 (\varDelta)f_1}{f_1} + \is{F_2
(\varDelta)f_2}{f_2}
   \\
& = \is{G_1 (\varDelta)f_1}{f_1} + \is{G_2
(\varDelta)f_2}{f_2}
   \\
& = \is{(G_1 \oplus G_2) (\varDelta)(f_1\oplus
f_2)}{f_1\oplus f_2}
   \\
& = \is{G (\varDelta)(f_1\oplus f_2)}{f_1\oplus f_2}
   \\
& = \is{F(\varDelta)(f_1\oplus f_2)}{f_1\oplus f_2}, \quad
f_j\in \mcal_j, \, j=1,2,
   \end{align*}
for any Borel subset $\varDelta$ of $\cbb$, so $F=F_1\oplus
F_2$. This completes the proof.
   \end{proof}
The next lemma is a direct consequence of the uniform
boundedness principle and \cite[Propositions~0.3 and
0.4]{Kub97}.
   \begin{lem} \label{wikwi}
If $T\in \ogr{\hh}$, then the following conditions are
equivalent{\em :}
   \begin{enumerate}
   \item[(i)] $r(T)<1$,
   \item[(ii)] the sequence $\{\sum_{j=0}^n
T^{*j}T^j\}_{n=0}^{\infty}$ is bounded in the {\sc wot},
   \item[(iii)] $\sum_{j=0}^{\infty}
\|T^{j}f\|^2 < \infty$ for all $f\in \hh$,
   \item[(iv)] the series  $\sum_{j=0}^{\infty}
T^{*j}T^j$ converges in the {\sc wot},
   \item[(v)] the
series $\sum_{j=0}^{\infty} T^{*j}T^j$ converges in the
operator norm.
   \end{enumerate}
   \end{lem}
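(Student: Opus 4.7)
The plan is to close a cycle among the five conditions. The trivial chain (v) $\Rightarrow$ (iv) $\Rightarrow$ (ii) follows from the fact that norm convergence implies {\sc wot}-convergence and every convergent sequence is bounded, while (iv) $\Rightarrow$ (iii) comes from evaluating $\sum_{j=0}^{\infty}\is{T^{*j}T^j f}{f} = \sum_{j=0}^{\infty}\|T^j f\|^2$ at each $f\in \hh$.

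Next I would prove (ii) $\Leftrightarrow$ (iii). Writing $S_n := \sum_{j=0}^{n} T^{*j}T^j$, the direction (ii) $\Rightarrow$ (iii) is immediate by specializing to diagonal matrix entries. Conversely, (iii) asserts $\sup_n \is{S_n f}{f} < \infty$ for each $f$; polarization upgrades this to $\sup_n |\is{S_n f}{g}| < \infty$ for all $f,g$, and two applications of the uniform boundedness principle (first in $g$ for fixed $f$ to get $\sup_n \|S_n f\|<\infty$, then in $f$) promote this to $\sup_n \|S_n\| < \infty$, which is (ii).

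The core step is (ii) $\Rightarrow$ (i), which I would route through (iv). The $S_n$ form a monotonically increasing sequence of positive operators that is norm-bounded by the previous paragraph, so the standard monotone-convergence theorem for self-adjoint operators produces $S_n \to Q$ in the {\sc sot}, proving (iv) with $Q \Ge I$. Reindexing yields the Lyapunov identity $Q - T^*QT = I$, so $T^*QT = Q - I \Le (1 - \|Q\|^{-1})Q$; setting $R := Q^{1/2}$ (invertible since $Q \Ge I$), a short computation rearranges this to $\|RTR^{-1}\|^2 \Le 1 - \|Q\|^{-1} < 1$. Since the spectral radius is a similarity invariant, $r(T) = r(RTR^{-1}) < 1$.

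To close the cycle, (i) $\Rightarrow$ (v) follows from Gelfand's formula: choosing $\rho \in (r(T),1)$ gives $\|T^j\| \Le C\rho^j$ eventually, so $\sum_j \|T^{*j}T^j\| \Le \sum_j \|T^j\|^2 < \infty$. I expect the Lyapunov-equation step $Q - T^*QT = I \Rightarrow r(T) < 1$ to be the main obstacle, since it is the only place where the strict inequality (as opposed to mere power-boundedness or strong stability) is genuinely exploited; this is essentially the content of the cited Propositions 0.3 and 0.4 of \cite{Kub97}.
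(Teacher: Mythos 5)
Your proof is correct, but it is much more self-contained than what the paper does: the paper offers no argument at all beyond the remark that the lemma ``is a direct consequence of the uniform boundedness principle and \cite[Propositions~0.3 and 0.4]{Kub97}'', i.e., the only genuinely nontrivial implication -- that square-summability of the orbits (equivalently, boundedness of the partial sums $S_n=\sum_{j=0}^n T^{*j}T^j$) forces $r(T)<1$ -- is outsourced to Kubrusly's book, with the UBP covering the bookkeeping between the weak, pointwise and norm formulations. You instead re-prove that ingredient: after upgrading (iii) to $\sup_n\|S_n\|<\infty$ by polarization and two applications of the UBP, you invoke the monotone (Vigier) convergence theorem to get $S_n\to Q\Ge I$ in the {\sc sot}, extract the Lyapunov identity $Q-T^*QT=I$, and deduce $\|Q^{1/2}TQ^{-1/2}\|^2\Le 1-\|Q\|^{-1}<1$, whence $r(T)<1$ by similarity invariance; the cycle is then closed with Gelfand's formula for (i)$\Rightarrow$(v). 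Each of these steps checks out (including the inequality $Q-I\Le(1-\|Q\|^{-1})Q$, which only needs $Q\Le\|Q\|I$ and $Q\Ge I$), and your closing remark correctly identifies the Lyapunov/similarity step as precisely the content of the cited propositions. What your route buys is a proof independent of \cite{Kub97}; what the paper's route buys is brevity, since for its purposes the lemma is an off-the-shelf fact.
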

The following description of the powers of a
B-operator can basically be proved as
\cite[Proposition~3.10]{C-J-J-S21}.
   \begin{lem} \label{xyzzyx}
Let $T = \big[\begin{smallmatrix} V & E \\
0 & X \end{smallmatrix}\big]$ be a B-operator relative
to $\hh_1\oplus \hh_2$ and $\{E_n\}_{n=1}^{\infty}$ be
as in \eqref{syru}. Then
   \begin{enumerate}
   \item[(i)] $T^n= \big[\begin{smallmatrix} V^n & E_n \\
0 & X^n \end{smallmatrix}\big]$ for $n\Ge 1$,
   \item[(ii)] $T^{*n}T^n= \Big[\begin{smallmatrix} I & 0 \\[.5ex]
0 & |E_n|^2 + |X^n|^2
\end{smallmatrix}\Big]$ for $n\Ge 1$,
   \item[(iii)] $E_n^*E_n =
E^*E \big(\sum_{j=0}^{n-1}X^{*j}X^j\big) =
\big(\sum_{j=0}^{n-1}X^{*j}X^j\big) E^*E$ for $n\Ge 1$.
   \end{enumerate}
   \end{lem}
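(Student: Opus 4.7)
The plan is to handle the three items in order, since each builds on its predecessor. For (i), I would argue by induction on $n$. The base case $n=1$ is immediate from $E_1 = E$. For the inductive step, I would compute
\begin{align*}
T^{n+1} = T\, T^n = \begin{bmatrix} V & E \\ 0 & X \end{bmatrix}\begin{bmatrix} V^n & E_n \\ 0 & X^n \end{bmatrix} = \begin{bmatrix} V^{n+1} & V E_n + E X^n \\ 0 & X^{n+1} \end{bmatrix},
\end{align*}
and then show that $V E_n + E X^n = E_{n+1}$ by shifting the summation index in the definition \eqref{syru}.

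For (ii), I would use (i) to write
\begin{align*}
T^{*n}T^n = \begin{bmatrix} V^{*n}V^n & V^{*n}E_n \\ E_n^* V^n & E_n^*E_n + X^{*n}X^n \end{bmatrix}.
\end{align*}
Since $V$ is an isometry, $V^{*n}V^n = I$ and, more generally, $V^{*n}V^j = V^{*(n-j)}$ for $0 \le j \le n$. Applying this inside the sum defining $E_n$ and using $V^*E = 0$ from \eqref{gqb-2} yields $V^{*n}E_n = 0$, and the $(2,1)$ entry vanishes by taking adjoints. This gives the desired diagonal form.

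For (iii), I would expand the double sum
\begin{align*}
E_n^* E_n = \sum_{k,j=0}^{n-1} X^{*(n-1-k)} E^* V^{*k} V^j E X^{n-1-j}.
\end{align*}
The key observation, and the most delicate bookkeeping step, is that all off-diagonal terms (those with $k \neq j$) vanish: when $k > j$, one has $V^{*k}V^j = V^{*(k-j)}$, so $V^{*k}V^j E = 0$ by \eqref{gqb-2}; when $k < j$, one has $V^{*k}V^j = V^{j-k}$, so $E^* V^{*k}V^j E = (V^{*(j-k)}E)^*E = 0$. Only the diagonal terms survive, leaving $E_n^*E_n = \sum_{k=0}^{n-1} X^{*(n-1-k)} E^*E\, X^{n-1-k}$. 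Now the algebraic constraint \eqref{gqb-3}, i.e.\ $XE^*E = E^*EX$, allows $E^*E$ to be slid past any power of $X$ on either side, and after reindexing $j = n-1-k$ we obtain both claimed factorizations.

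The main obstacle is really only in (iii), and it is the careful handling of the double sum and the two cases $k > j$ and $k < j$; once this cancellation is performed, the remaining manipulation via \eqref{gqb-3} is routine. The result is essentially a direct generalization of \cite[Proposition~3.10]{C-J-J-S21}, with the only new ingredient being the observation that the commutation \eqref{gqb-3} propagates to all powers of $X$.
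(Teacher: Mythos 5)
Your proposal is correct: the induction for (i), the use of $V^{*n}V^j=V^{*(n-j)}$ together with $V^*E=0$ for (ii), and the vanishing of the off-diagonal terms in the double sum plus the commutation \eqref{gqb-3} (and its adjoint) for (iii) are exactly the computations the paper delegates to the cited Proposition~3.10 of \cite{C-J-J-S21} rather than writing out. So you have supplied, correctly, the standard argument the authors had in mind.
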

The lemma below provides equivalent conditions for the
sequence $\{E_n\}_{n=1}^{\infty}$ to converge in the
{\sc wot} to $0$.
   \begin{lem} \label{cuscik}
Let $T = \big[\begin{smallmatrix} V & E \\
0 & X \end{smallmatrix}\big]$ be a B-operator relative
to $\hh_1\oplus \hh_2$ and $\{E_n\}_{n=1}^{\infty}$ be
as in \eqref{syru}. Then the following conditions are
equivalent{\em :}
   \begin{enumerate}
   \item[(i)]
$\text{\sc (wot)}\lim_{n\to\infty} E_n=0$,
   \item[(ii)] the sequence $\{E_n\}_{n=1}^{\infty}$ is {\sc
wot}-convergent,
   \item[(iii)] $\sup_{n\Ge
1} \|E_n\| < \infty$.
   \end{enumerate}
Moreover, if $r(X|_{\hh_{22}}) < 1$, where $\hh_{22}$
is as in {\em \eqref{reotp}}, then {\em (i)} is
satisfied.
   \end{lem}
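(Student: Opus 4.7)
The plan is to establish the cycle (i)$\Rightarrow$(ii)$\Rightarrow$(iii)$\Rightarrow$(i) and then to deduce the \emph{moreover} clause from Lemma~\ref{wikwi}. The implication (i)$\Rightarrow$(ii) is tautological, and for (ii)$\Rightarrow$(iii) I would observe that a {\sc wot}-convergent sequence in $\ogr{\hh_2,\hh_1}$ is pointwise weakly bounded, so two applications of the uniform boundedness principle yield norm-boundedness.

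The main obstacle will be (iii)$\Rightarrow$(i), because one must force the weak limit to be \emph{zero}, not merely to exist. My approach is to exploit the constraint $V^*E=0$ through the Wold decomposition of the isometry $V$. Setting $L:=\jd{V^*}$, one has $\hh_1 = \hh_{\mathrm{1u}}\oplus \bigoplus_{k=0}^{\infty} V^k L$ with the summands pairwise orthogonal, and $\ob E \subseteq L$; hence the ranges of the terms in $E_n = \sum_{j=0}^{n-1}V^j E X^{n-1-j}$ are mutually orthogonal, and Pythagoras gives
\begin{equation*}
\|E_n g\|^2 = \sum_{j=0}^{n-1}\|EX^j g\|^2, \quad g\in\hh_2,
\end{equation*}
so (iii) forces $(\|EX^j g\|)_{j\Ge 0}\in \ell^2$. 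Next, for an arbitrary $h\in\hh_1$ with Wold expansion $h = h_{\mathrm{u}} + \sum_{k\Ge 0}V^k\ell_k$ (where $\ell_k\in L$ and $\sum_k\|\ell_k\|^2<\infty$), the same orthogonalities collapse the inner product to
\begin{equation*}
\is{E_n g}{h} = \sum_{m=0}^{n-1}\is{EX^m g}{\ell_{n-1-m}}.
\end{equation*}
This is the discrete convolution at index $n-1$ of two $\ell^2$ sequences; splitting the sum at $m=n/2$ and applying Cauchy--Schwarz on each half forces it to $0$ as $n\to\infty$ (the standard fact $\ell^2\ast\ell^2\subseteq c_0$), which gives (i).

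For the \emph{moreover} clause I would set $Y := X|_{\hh_{22}}$ and, writing $g_2$ for the $\hh_{22}$-component of $g\in\hh_2$, exploit that $E$ annihilates $\hh_{21}$ and that $\hh_{22}$ reduces $X$ to get $\|EX^j g\|\Le\|E\|\,\|Y^j g_2\|$. If $r(Y)<1$, Lemma~\ref{wikwi} furnishes a constant $C<\infty$ with $\sum_{j=0}^{\infty}\|Y^j f\|^2\Le C\|f\|^2$ for every $f\in\hh_{22}$. Substituting into the Pythagorean identity above yields $\|E_n g\|^2\Le \|E\|^2 C\|g\|^2$, whence $\sup_n\|E_n\|<\infty$, which is (iii), and (i) follows from the already proved equivalence.
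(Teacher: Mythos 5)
Your proposal is correct, and all three implications plus the ``moreover'' clause go through as you describe: the Pythagorean identity $\|E_ng\|^2=\sum_{j=0}^{n-1}\|EX^jg\|^2$ is valid because $\ob{E}\subseteq\jd{V^*}$ makes the summands $V^jEX^{n-1-j}g$ mutually orthogonal, the collapse of $\is{E_ng}{h}$ onto the Wold coefficients of $h$ is legitimate, and the splitting-plus-Cauchy--Schwarz argument does give $\ell^2\ast\ell^2\subseteq c_0$. Your route shares its skeleton with the paper's --- both hinge on the Wold decomposition of $V$, on $V^*E=0$, and on extracting from (iii) the square-summability $\sum_j\|EX^jg\|^2\Le(\sup_n\|E_n\|)^2\|g\|^2$ (the paper gets this algebraically from Lemma~\ref{xyzzyx}(iii) and $X|E|=|E|X$, you get it geometrically from orthogonality) --- but the finishing step is genuinely different. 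The paper tests $E_n$ only against vectors $g$ in a single Wold summand $V^k(\jd{V^*})$ and against $f_1\oplus f_2$ with $f_1\in\ob{|E|}$, $f_2\in\jd{|E|}$, reduces the pairing to the single term $\is{X^{n-1-k}f_1}{E^*V^{*k}g}$, proves $\|X^nf\|\to0$ on $\ob{|E|}$ (their \eqref{insss}), and then must invoke a density argument together with the uniform bound (iii) to cover all of $\hh_2\times\hh_1$. You instead pair $E_ng$ with an arbitrary $h\in\hh_1$ via its full Wold expansion and dominate $\is{E_ng}{h}$ by a convolution of two $\ell^2$ sequences, which kills the limit in one stroke, with no density step and no need for the decomposition $\hh_2=\overline{\ob{|E|}}\oplus\jd{|E|}$. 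What the paper's longer route buys is the intermediate fact \eqref{insss}, which it reuses later in the proof of Lemma~\ref{nubt}; your argument is more self-contained for this lemma but would not supply that byproduct. The treatments of the ``moreover'' part are essentially the same, both resting on Lemma~\ref{wikwi} applied to $X|_{\hh_{22}}$.
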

   \begin{proof}
(i)$\Rightarrow$(ii) Obvious.

(ii)$\Rightarrow$(iii) Apply the uniform boundedness
principle.

(iii)$\Rightarrow$(i) In view of the Wold
decomposition theorem (see
\cite[Theorem~I.1.1]{SF70}), $\hh_1=\hh_{\mathrm{1u}}
\oplus \hh_{\mathrm{1s}}$, where $\hh_{\mathrm{1u}}$
reduces $V$ to a unitary operator, $\hh_{\mathrm{1s}}$
reduces $V$ to a completely non-unitary operator and
   \begin{align}
\label{setwq} \hh_{\mathrm{1s}} =
\bigoplus_{n=0}^{\infty} V^n (\jd{V^*}).
   \end{align}
Since, by \eqref{gqb-2}, $\ob{E} \subseteq \jd{V^*}$,
we deduce that
   \begin{align} \label{ibyt}
V^jEX^{n-1-j}f \in V^j (\jd{V^*}), \quad j\Ge 0, \, n
\Ge j+1, \, f \in \hh_2.
   \end{align}
Fix temporarily an integer $k\Ge 0$. Then
   \allowdisplaybreaks
   \begin{align} \notag
\is{E_n (f_1\oplus f_2)}{g} & \overset{\eqref{syru} }=
\Big\langle \sum_{j=0}^{n-1} V^jEX^{n-1-j} (f_1\oplus
f_2),g\Big\rangle
   \\ \notag
&\hspace{.6ex}\overset{(*)} = \is{V^k E X^{n-1-k}
(f_1\oplus f_2)}{g}
   \\ \notag
&\hspace{.8ex} = \is{X^{n-1-k} f_1}{E^* V^{*k}g} +
\is{V^k E X^{n-1-k} f_2}{g}
   \\ \notag
&\hspace{.6ex} \overset{(\dag)}= \is{X^{n-1-k}
f_1}{E^* V^{*k}g}, \quad f_1\in \ob{|E|}, \, f_2 \in
\jd{|E|},
      \\ \label{dwunast}
& \hspace{30ex} g \in V^k (\jd{V^*}),\, n \Ge k+1.
   \end{align}
where $(*)$ is a consequence of \eqref{setwq} and
\eqref{ibyt}, and $(\dag)$ follows from the fact that
$\jd{|E|}$ reduces $X$ and $\jd{E}=\jd{|E|}$. However,
by \eqref{gqb-3} and the square root theorem,
$X|E|=|E|X$, which together with
Lemma~\ref{xyzzyx}(iii) yields
   \allowdisplaybreaks
   \begin{align*}
\sum_{j=0}^{n} \|X^j |E|h\|^2 & = \Big \langle
\sum_{j=0}^{n} X^{*j}X^j E^*E h, h\Big\rangle =
\is{E_{n+1}^*E_{n+1}h}{h}
   \\
& \Le \big(\sup_{m\Ge 1} \|E_m\|\big)^2 \|h\|^2, \quad
h\in \hh_2, \, n\Ge 0.
   \end{align*}
This, together with (iii), implies that
   \begin{align} \label{insss}
\lim_{n\to \infty} \|X^n f\|=0, \quad f\in \ob{|E|}.
   \end{align}
Hence, by \eqref{dwunast}, we have
   \begin{align} \label{gfds}
\lim_{n\to\infty} \is{E_n (f_1\oplus f_2)}{g} = 0,
\quad f_1\in \ob{|E|}, \, f_2 \in \jd{|E|}, \, g\in
V^k (\jd{V^*}).
   \end{align}
It follows from \eqref{syru}, \eqref{setwq},
\eqref{ibyt} and the Wold decomposition theorem~that
   \begin{align} \label{ytred}
\is{E_n f}{g} = 0, \quad n \Ge 1, \, f\in \hh_2, \, g \in
\hh_{\mathrm{1u}}.
   \end{align}
Since $\ob{|E|}\oplus \jd{|E|}$ is dense in $\hh_2$,
combining \eqref{gfds} and \eqref{ytred} with the Wold
decomposition theorem and using (iii), we conclude that the
sequence $\{E_n\}_{n=1}^{\infty}$ {\sc wot}-converges to $0$.

It remains to show the ``moreover'' part. Note that by
\eqref{reotp}, the space $\hh_{22}$ reduces $X$.
Assume that $r(X|_{\hh_{22}})<1$. It is easy to see
that then the sequence $\{\|\sum_{j=0}^n
(X|_{\hh_{22}})^{*j}(X|_{\hh_{22}})^j\|\}_{n=0}^{\infty}$
is bounded. Hence by Lemma~\ref{xyzzyx}(iii),
$\sup_{n\Ge 1} \|E_n\| < \infty$. Using the
implication (iii)$\Rightarrow$(i) completes the~proof.
   \end{proof}
Regarding the ``moreover'' part of Lemma~\ref{cuscik},
let us observe the following.
   \begin{lem} \label{tywsr}
Let $T = \big[\begin{smallmatrix} V & E \\
0 & X \end{smallmatrix}\big]$ be a B-operator relative
to $\hh_1\oplus \hh_2$, $\{E_n\}_{n=1}^{\infty}$ be as
in \eqref{syru} and $\hh_{22}$ be as in {\em
\eqref{reotp}}. Suppose that $\ob{|E|}$ is closed.
Then the following conditions are equivalent{\em :}
   \begin{enumerate}
   \item[(i)] $\sup_{n\Ge 1} \|E_n\| <
\infty$,
   \item[(ii)] the series
$\sum_{j=0}^{\infty} (X|_{\hh_{22}})^{*j}(X|_{\hh_{22}})^j$
is {\sc wot}-convergent,
   \item[(iii)]
$r(X|_{\hh_{22}}) < 1$.
   \end{enumerate}
   \end{lem}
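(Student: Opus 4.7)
\textbf{Proof proposal for Lemma~\ref{tywsr}.} The plan is to set up a single algebraic identity tying $E_n^*E_n$ to the partial sums $\sum_{j=0}^{n-1}W^{*j}W^j$, where $W:=X|_{\hh_{22}}$, and then read off the three implications. By \eqref{reotp} the space $\hh_{22}$ reduces $X$, so $W \in \ogr{\hh_{22}}$ is well defined; moreover, since $\hh_{21}=\jd{E}=\jd{|E|}$ also reduces $X$ and $E^*E$ vanishes on $\hh_{21}$, Lemma~\ref{xyzzyx}(iii) gives, after compressing to $\hh_{22}$,
\begin{align} \label{keyid}
E_n^*E_n|_{\hh_{22}} \;=\; \bigl(|E|^2|_{\hh_{22}}\bigr)\sum_{j=0}^{n-1} W^{*j}W^j, \quad n\Ge 1,
\end{align}
with the factors commuting (because $X|E|=|E|X$). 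Note also that (ii)$\Leftrightarrow$(iii) is just Lemma~\ref{wikwi} applied to $W$, so everything reduces to proving (i)$\Leftrightarrow$(iii).

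First I would dispose of (iii)$\Rightarrow$(i): this is essentially the ``moreover'' part of Lemma~\ref{cuscik}, but it also follows at once from \eqref{keyid} and Lemma~\ref{wikwi}(v), since $r(W)<1$ makes $\sum_{j=0}^{\infty} W^{*j}W^j$ norm-convergent, so the right-hand side of \eqref{keyid} is uniformly bounded in $n$, hence so is $\|E_n\|^2=\|E_n^*E_n\|$. (This direction does not need closedness of $\ob{|E|}$.)

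The core of the lemma is (i)$\Rightarrow$(iii), and this is where the hypothesis $\ob{|E|}=\ob{|E|}$ closed is used. Since $|E|$ is self-adjoint, $\jd{|E|}^{\perp}=\overline{\ob{|E|}}=\ob{|E|}=\hh_{22}$ by the closedness assumption, so the restriction $|E||_{\hh_{22}}$ is an injective operator in $\ogr{\hh_{22}}$ with closed range equal to $\hh_{22}$. By the closed range/bounded inverse theorem it is bounded below, hence invertible with bounded inverse; consequently $|E|^2|_{\hh_{22}}$ is also boundedly invertible. Inverting the commuting factor in \eqref{keyid} gives
\begin{align*}
\sum_{j=0}^{n-1} W^{*j}W^j \;=\; \bigl(|E|^2|_{\hh_{22}}\bigr)^{-1}\, E_n^*E_n|_{\hh_{22}}, \quad n \Ge 1,
\end{align*}
so $\big\|\sum_{j=0}^{n-1} W^{*j}W^j\big\|\Le \bigl\|(|E|^2|_{\hh_{22}})^{-1}\bigr\|\cdot \sup_{m\Ge 1}\|E_m\|^2<\infty$. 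The equivalence (ii)$\Leftrightarrow$(iii) of Lemma~\ref{wikwi}, applied to $W$, now yields $r(W)<1$.

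The only real obstacle is obtaining the bounded invertibility of $|E|^2|_{\hh_{22}}$, which is exactly what the closedness hypothesis supplies; without it the partial sums $\sum W^{*j}W^j$ need not inherit boundedness from $E_n^*E_n$, and (i) can easily hold while $r(W)=1$. The remaining steps are the identity \eqref{keyid} (a direct restriction of Lemma~\ref{xyzzyx}(iii)) and invocations of Lemma~\ref{wikwi}, both of which are routine.
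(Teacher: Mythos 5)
Your proposal is correct and follows essentially the same route as the paper: the same restriction of Lemma~\ref{xyzzyx}(iii) to $\hh_{22}$, bounded invertibility of $|E|\big|_{\hh_{22}}$ from the closed-range hypothesis, Lemma~\ref{wikwi} applied to $X|_{\hh_{22}}$, and the ``moreover'' part of Lemma~\ref{cuscik} for (iii)$\Rightarrow$(i). The only quibble is the citation of ``(ii)$\Leftrightarrow$(iii) of Lemma~\ref{wikwi}'' where you mean the equivalence of bounded partial sums with $r<1$ (i.e.\ (i)$\Leftrightarrow$(ii) there), a harmless mislabeling.
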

   \begin{proof}
(i)$\Rightarrow$(ii) Assume that $\sup_{n\Ge 1}
\|E_n\| < \infty$. By the inverse mapping theorem, the
operator $A:=|E||_{\hh_{22}}$ is invertible in
$\ogr{\hh_{22}}$. This, together with
Lemma~\ref{xyzzyx}(iii), implies~that
   \begin{align*}
\sum_{j=0}^n (X|_{\hh_{22}})^{*j}(X|_{\hh_{22}})^j =
(E_{n+1}^*E_{n+1})|_{\hh_{22}}A^{-2}, \quad n\Ge 0.
   \end{align*}
Hence $\sup_{n\Ge 0} \big\|\sum_{j=0}^n
(X|_{\hh_{22}})^{*j}(X|_{\hh_{22}})^j\big\| < \infty$.
Therefore, by Lemma~\ref{wikwi}, the series
$\sum_{j=0}^{\infty} (X|_{\hh_{22}})^{*j}(X|_{\hh_{22}})^j$
is {\sc wot}-convergent.

(ii)$\Leftrightarrow$(iii) This is a direct consequence of
Lemma~\ref{wikwi} applied to $X|_{\hh_{22}}$.

(iii)$\Rightarrow$(i) Apply the ``moreover'' part of
Lemma~\ref{cuscik}.
   \end{proof}
We will show in Example~\ref{nitclyst} that the
conclusion of Lemma~\ref{tywsr} is no longer valid if
the hypothesis that $\ob{|E|}$ is closed is deleted.

We also need the following relationship between the
weak stability of a unitary operator $U$ and the weak
convergence of the power sequence of $U$.
   \begin{lem} \label{ergyt}
Suppose that $U\in\ogr{\hh}$ is a unitary operator. Then the
following statements hold{\em :}
   \begin{enumerate}
   \item[(i)] if $\{U^n\}_{n=1}^{\infty}$ is {\sc
wot}-convergent, then $\text{\sc (wot)}\lim_{n\to\infty} U^n$
is the orthogonal projection of $\hh$ onto $\jd{I-U}$,
   \item[(ii)] $U$ is weakly stable if and only if
$\jd{I-U}=\{0\}$ and $\{U^n\}_{n=1}^{\infty}$ is {\sc
wot}-convergent.
   \end{enumerate}
   \end{lem}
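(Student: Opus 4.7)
My plan is to prove (i) directly via an algebraic argument exploiting the continuity of WOT under fixed left/right multiplication, and then deduce (ii) as an immediate consequence.

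For (i), I would set $L := \text{(\textsc{wot})}\lim_{n\to\infty} U^n$ and $K := \jd{I-U}$. The first step is to note that for any fixed $A \in \ogr{\hh}$, the map $T \mapsto AT$ is WOT-continuous (since $\langle ATf, g\rangle = \langle Tf, A^*g\rangle$). Applying this with $A = U$, from $U^{n+1} \to L$ and $U \cdot U^n \to UL$ I obtain $UL = L$, so $\ob{L} \subseteq K$. The second step shows $L$ acts as the identity on $K$: if $f \in K$, then $Uf = f$, hence $U^n f = f$ for all $n$, and passing to the WOT limit gives $Lf = f$.

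The third step is to show $L$ vanishes on $K^\perp$. Since $U$ is unitary, $Uf = f \iff U^*f = f$, so $K = \jd{I - U^*}$ as well, and therefore $K^\perp = \overline{\ob{I - U^*}}$. For vectors of the form $g = (I - U^*)h$, a telescoping computation gives
\begin{align*}
\langle U^n g, k\rangle = \langle U^n h, k\rangle - \langle U^{n-1} h, k\rangle \xrightarrow[n\to\infty]{} \langle Lh, k\rangle - \langle Lh, k\rangle = 0,
\end{align*}
so $Lg = 0$. Since $L$ is bounded (as a WOT-limit of a norm-bounded sequence, by the uniform boundedness principle) and the range of $I - U^*$ has dense span in $K^\perp$, this gives $L|_{K^\perp} = 0$. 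Combined with step two, this identifies $L$ with the orthogonal projection of $\hh$ onto $K = \jd{I-U}$.

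For (ii), both implications follow at once from (i). If $U$ is weakly stable, then $\{U^n\}_{n=1}^{\infty}$ is WOT-convergent with limit $0$, and (i) forces the projection onto $\jd{I-U}$ to be $0$, i.e., $\jd{I-U}=\{0\}$. Conversely, if $\{U^n\}_{n=1}^{\infty}$ is WOT-convergent and $\jd{I-U}=\{0\}$, then (i) identifies the limit with the projection onto $\{0\}$, namely $0$.

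I do not anticipate a genuine obstacle here; the only subtle point is the identity $\jd{I-U} = \jd{I-U^*}$ for unitaries $U$, which is what makes the telescoping argument produce a dense set in $K^\perp$ on which $L$ vanishes. If one wanted a spectral-theoretic proof instead, one could diagonalize $U$ via its scalar spectral measure and deduce convergence of $\int z^n \, dG(z)$ at $z = 1$ only, but the direct route above avoids invoking the spectral theorem.
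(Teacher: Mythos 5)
Your proof is correct, but it takes a genuinely different route from the paper's. The paper proves (i) in two lines by citing two results: since $\{U^n\}_{n=1}^{\infty}$ is {\sc wot}-convergent, the Ces\`aro mean theorem gives that the averages $\frac{1}{n}\sum_{j=0}^{n-1}U^j$ have the same {\sc wot} limit, and von Neumann's mean ergodic theorem identifies the ({\sc sot}) limit of these averages with the orthogonal projection onto $\jd{I-U}$; part (ii) is then read off exactly as you do. You instead argue directly and without any ergodic-theoretic input: $UL=L$ gives $\ob{L}\subseteq\jd{I-U}$, $L$ fixes $\jd{I-U}$, and the telescoping identity $U^n(I-U^*)h=U^nh-U^{n-1}h$ shows $L$ vanishes on $\ob{I-U^*}$, hence by boundedness of $L$ (indeed $\|U^n\|=1$) on all of $\jd{I-U}^{\perp}=\overline{\ob{I-U^*}}$, which pins $L$ down as the projection. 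In effect you reprove the special case of the mean ergodic theorem that is needed; note that the telescoping step uses $U^nU^*=U^{n-1}$, i.e., $UU^*=I$, so unitarity (co-isometry) is genuinely used, just as it is in the paper's appeal to von Neumann's theorem. What each approach buys: the paper's proof is shorter modulo the two citations and makes the ergodic content explicit; yours is elementary and self-contained. Two cosmetic remarks: the equality $K^{\perp}=\overline{\ob{I-U^*}}$ follows directly from $\jd{I-U}^{\perp}=\overline{\ob{(I-U)^*}}$, so you do not actually need the (true) identity $\jd{I-U}=\jd{I-U^*}$ at that point; and your first step $\ob{L}\subseteq\jd{I-U}$ is not needed once steps two and three are in place.
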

   \begin{proof}
(i) Set $C=\text{\sc (wot)}\lim_{n\to \infty} U^n$. It
follows from the Ces\`{a}ro mean theorem that
   \begin{align} \label{wrrr}
\text{({\sc wot})} \lim_{n\to\infty} \frac{1}{n}
\sum_{j=0}^{n-1} U^j = C.
   \end{align}
In turn, by von Neumann's ergodic theorem (see
\cite[Theorem~II.11]{R-S-I80}),
   \begin{align*}
\text{({\sc sot})} \lim_{n\to\infty} \frac{1}{n}
\sum_{j=0}^{n-1} U^j = P,
   \end{align*}
where $P$ stands for the orthogonal projection of
$\hh$ onto $\jd{I-U}$. Combined with \eqref{wrrr},
this yields $C=P$.

(ii) This is an immediate consequence of (i).
   \end{proof}
   \begin{cor} \label{juzyr} Suppose that $U\in\ogr{\hh}$ is a unitary operator. Let
$U=\big[\begin{smallmatrix} J & 0\\0 &
U_0\end{smallmatrix}\big]$ be the matrix
representation of $U$ relative to the orthogonal
decomposition $\hh=\jd{I-U}\oplus \jd{I-U}^{\perp}$,
where $J$ is the identity operator on $\jd{I-U}$. Then
$\{U^n\}_{n=1}^{\infty}$ is {\sc wot}-convergent if
and only if $U_0$ is weakly stable.
   \end{cor}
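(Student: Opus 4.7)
The plan is to reduce the statement to Lemma~\ref{ergyt}(ii) applied to the unitary operator $U_0$. First I would check that the block-diagonal form written in the statement is legitimate. Since $U$ is unitary, a vector $f$ satisfies $Uf=f$ if and only if $U^*f=U^{-1}f=f$, so $\jd{I-U}=\jd{I-U^*}$. This means $\jd{I-U}$ and $\jd{I-U}^{\perp}$ both reduce $U$, and the restriction $U_0:=U|_{\jd{I-U}^{\perp}}$ is itself unitary in $\ogr{\jd{I-U}^{\perp}}$.

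Next, since $J$ is the identity on $\jd{I-U}$, the block-diagonal form gives
\begin{align*}
U^n = \begin{bmatrix} I & 0 \\ 0 & U_0^n \end{bmatrix}, \quad n\Ge 1,
\end{align*}
so the sequence $\{U^n\}_{n=1}^{\infty}$ is {\sc wot}-convergent if and only if $\{U_0^n\}_{n=1}^{\infty}$ is.

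Finally, I would apply Lemma~\ref{ergyt}(ii) to $U_0$. The key observation needed here is that $\jd{I-U_0}=\{0\}$: any $f\in \jd{I-U}^{\perp}$ with $U_0f=f$ would satisfy $Uf=f$ as well, placing $f$ in $\jd{I-U}\cap \jd{I-U}^{\perp}=\{0\}$. Consequently, by Lemma~\ref{ergyt}(ii), $\{U_0^n\}_{n=1}^{\infty}$ is {\sc wot}-convergent if and only if $U_0$ is weakly stable. Combined with the reduction above, this yields the equivalence claimed in the corollary.

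I do not anticipate any substantive obstacle; the proof is essentially a two-line reduction to Lemma~\ref{ergyt}(ii). The only point requiring care is the verification that $\jd{I-U_0}=\{0\}$, which prevents the vacuous case where $U_0$ could have a nontrivial fixed-point space but still have {\sc wot}-convergent powers without being weakly stable.
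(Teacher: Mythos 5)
Your argument is correct and is essentially the route the paper intends: the corollary is stated as an immediate consequence of Lemma~\ref{ergyt}, obtained exactly by noting that $\jd{I-U}$ reduces $U$, that $U^n=I\oplus U_0^n$, and that $\jd{I-U_0}=\{0\}$ so that Lemma~\ref{ergyt} gives the equivalence for $U_0$. Your extra care in checking $\jd{I-U_0}=\{0\}$ is precisely the point that makes the reduction work, so nothing is missing.
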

   \section{\label{SuS2.2}Proofs of
the main results} We begin by proving
Theorem~\ref{puwsb}.
   \begin{proof}[Proof of Theorem~\ref{puwsb}]
(i)$\Leftrightarrow$(ii) Using Lemma~\ref{xyzzyx}(i), we
deduce that the sequence $\{T^n\}_{n=1}^{\infty}$ is {\sc
wot}-convergent if and only if the sequences
$\{V^n\}_{n=1}^{\infty}$, $\{E_n\}_{n=1}^{\infty}$ and
$\{X^n\}_{n=1}^{\infty}$ are {\sc wot}-convergent. In view of
Lemma~\ref{cuscik}, the sequence $\{E_n\}_{n=1}^{\infty}$ is
{\sc wot}-convergent if and only if $\sup_{n\Ge 1} \|E_n\| <
\infty$, and if this is the case then $\text{\sc
(wot)}\lim_{n\to \infty} E_n = 0$. It follows from the Wold
decomposition theorem that $W$ is weakly stable, where
$W:=V|_{\hh_1\ominus \hh_{1\mathrm{u}}}$. Hence, because
$V=U\oplus W$, the sequence $\{V^n\}_{n=1}^{\infty}$ is {\sc
wot}-convergent if and only if the sequence
$\{U^n\}_{n=1}^{\infty}$ is {\sc wot}-convergent. This
implies that the conditions (i) and (ii) are equivalent.

Suppose now that (ii) is satisfied. It follows from
Lemma~\ref{ergyt}(i) that $P:=\text{\sc (wot)}\lim_{n\to
\infty} U^n$ is the orthogonal projection of $\hh$ onto
$\jd{I-U}$. From what was shown in the previous paragraph, we
can deduce that \eqref{mytrfwb} is valid.

To prove the ``furthermore'' part, assume that
$\{U^n\}_{n=1}^{\infty}$ is {\sc wot}-convergent and
$r(X) < 1$. Then $\lim_{n \to \infty} X^n=0$ and
$r(X|_{\hh_{22}})<1$, so by Lemma~\ref{cuscik} the
condition (ii) is satisfied and consequently
\eqref{mytrfwb} holds with $A=0$.
   \end{proof}
   Theorem~\ref{niekj} is basically a consequence of
the proof of Theorem~\ref{puwsb}.
   \begin{proof}[Proof of Theorem~\ref{niekj}]
(i)$\Rightarrow$(ii) By assumption and Lemma~\ref{xyzzyx}(i),
the pow\-er sequences $\{V^n\}_{n=1}^{\infty}$ and
$\{X^n\}_{n=1}^{\infty}$ are {\sc sot}-convergent. Let
$V=U\oplus W$ be the Wold decomposition of $V$, where $U\in
\ogr{\hh_{1\mathrm{u}}}$ and $W\in
\ogr{\hh_{1\mathrm{u}}^{\perp}}$ are the unitary and the
completely non-unitary parts of $V$, respectively. Clearly,
the power sequences $\{U^n\}_{n=1}^{\infty}$ and
$\{W^n\}_{n=1}^{\infty}$ are {\sc sot}-convergent. By the
Wold decomposition theorem, $W$ is weakly stable, and
consequently $W$ is strongly stable. Since
   \begin{align} \label{drwa}
   \begin{minipage}{50ex}
{\em the {\sc sot}-limit of isometries is an
isometry,}
   \end{minipage}
   \end{align}
we conclude that $\hh_{1\mathrm{u}}=\hh_{1}$ and $V=U$ is
unitary. It follows from Lemma~\ref{ergyt}(i) that $\text{\sc
(sot)}\lim_{n\to\infty} V^n$ is the orthogonal projection of
$\hh_1$ onto $\jd{I-V}$. Combined with \eqref{drwa}, this
yields $V=I$. Hence, by \eqref{gqb-2}, $E= 0$, which shows
that (ii) holds.

(ii)$\Rightarrow$(i) Since $T=
\big[\begin{smallmatrix} I & 0 \\ 0 & X
\end{smallmatrix}\big]$, we deduce that
\eqref{ctrex} holds. An application of Bishop's
theorem (see \cite[Theorem~II.1.17]{Co91}) justifies
the ``furthermore'' part.
   \end{proof}
   \begin{rem}
An inspection of the proof of Theorem~\ref{niekj}
shows that if $V$ is an isometry, then
$\{V^n\}_{n=1}^{\infty}$ is {\sc sot}-convergent if
and only if $V=I$.
   \hfill $\diamondsuit$
   \end{rem}
Before proving Theorem~\ref{puwsa}, we need an auxiliary
lemma. First, observe that an application of the uniform
boundedness principle, together with Gelfand's formula for
the spectral radius and the fact that the norm and the
spectral radius of a subnormal operator coincide (see
\cite[Propositions~II.4.2 and II.4.6]{Co91}), leads to the
following assertion.
   \begin{align} \label{wyudr}
   \begin{minipage}{73ex}
{\em If $T\in \ogr{\hh}$ is such that the power
sequence $\{T^n\}_{n=1}^{\infty}$ is {\sc
wot}-convergent, then $r(T)\Le 1${\em ;} if in
addition $T$ is subnormal, then $\|T\|\Le 1$}.
   \end{minipage}
   \end{align}
This means that if $T = \big[\begin{smallmatrix} V & E \\
0 & X \end{smallmatrix}\big]$ is B-subnormal relative
to $\hh_1\oplus \hh_2$, then the contractivity of $X$
is necessary for each of the power sequences
$\{X^n\}_{n=1}^{\infty}$ and $\{T^n\}_{n=1}^{\infty}$
to converge in the {\sc wot}.
   \begin{lem} \label{nubt}
Let $T = \big[\begin{smallmatrix} V & E \\
0 & X \end{smallmatrix}\big]$ be a B-subnormal
operator relative to $\hh_1\oplus \hh_2$, $F$ be the
semispectral measure of $X$ and $\hh_{22}$ be as in
{\em \eqref{reotp}}. Suppose that $\|X\|\Le 1$ and
$\sup_{n\Ge 1} \|E_n\| < \infty$. Then
   \begin{enumerate}
   \item[(i)] $\hh_{22}$ reduces both $X$ and
$F$, $F(\cdot)|_{\hh_{22}}$ is the semispectral
measure of $X|_{\hh_{22}}$ and
$F(\tbb)|_{\hh_{22}}=0$,
   \item[(ii)] $X|_{\hh_{22}}$ is weakly stable,
   \item[(iii)] if $E$ is
injective, then $X$ is weakly stable.
   \end{enumerate}
   \end{lem}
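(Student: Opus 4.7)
The overall plan is to treat the three parts sequentially, since each rests on the previous one. Part (i) sets up the semispectral calculus on $\hh_{22}$ and, most importantly, establishes that the boundary circle $\tbb$ carries no mass of $F$ on $\hh_{22}$; part (ii) then becomes a routine dominated-convergence argument; and part (iii) is an immediate specialization of (ii) under the injectivity hypothesis.

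For (i), I would first recall from \eqref{reotp} that $\hh_{22}$ reduces $X$, and then invoke Lemma~\ref{sspor} (applied to $X$ and the reducing decomposition $\hh_2=\hh_{21}\oplus \hh_{22}$) to conclude that $\hh_{22}$ also reduces $F$ and that $F(\cdot)|_{\hh_{22}}$ is the semispectral measure of $X|_{\hh_{22}}$. The nontrivial assertion is $F(\tbb)|_{\hh_{22}}=0$. Here I would exploit Lemma~\ref{xyzzyx}(iii): expanding $\is{E_n^*E_n h}{h}$ yields
\begin{align*}
\sum_{j=0}^{n-1} \|X^j|E|h\|^2 = \is{E_n^*E_n h}{h} \Le \big(\sup_{m\Ge 1}\|E_m\|\big)^2 \|h\|^2, \quad h\in \hh_2,
\end{align*}
so $\sum_{j=0}^{\infty}\|X^j f\|^2<\infty$ for every $f\in \ob{|E|}$. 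Since $X$ is a subnormal contraction, its minimal normal extension is also a contraction, hence $\supp F\subseteq \dbbc$. Using \eqref{mymor}, for $f\in \ob{|E|}$ we have $\|X^n f\|^2=\int_{\dbbc} |z|^{2n}\D\is{F(z)f}{f}$, and monotone convergence applied to $\sum_{n=0}^{N}|z|^{2n}$ forces $\is{F(\tbb)f}{f}=0$, otherwise the partial sums $\sum_{n=0}^{N}\|X^n f\|^2$ would diverge. Polarization and the density of $\ob{|E|}$ in $\hh_{22}$, together with boundedness of $F(\tbb)$, upgrade this to $F(\tbb)|_{\hh_{22}}=0$.

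For (ii), applying (i), the measure $\is{F(\cdot)f}{f}$ for $f\in \hh_{22}$ is supported in $\dbb \cup (\dbbc \setminus \tbb)=\dbb$ (in fact $\supp F\subseteq \dbbc$ but with no mass on $\tbb$). Thus $|z|^{2n}\to 0$ $\is{F(\cdot)f}{f}$-almost everywhere, and since $|z|^{2n}\Le 1$, dominated convergence gives
\begin{align*}
\|(X|_{\hh_{22}})^n f\|^2=\int_{\dbbc} |z|^{2n}\,\D\is{F(z)|_{\hh_{22}}f}{f}\xrightarrow[n\to\infty]{} 0, \quad f\in \hh_{22}.
\end{align*}
This shows $X|_{\hh_{22}}$ is strongly stable, hence a fortiori weakly stable. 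For (iii), injectivity of $E$ means $\jd{|E|}=\jd{E}=\{0\}$, so $\hh_{21}=\{0\}$ and $\hh_{22}=\hh_2$; then $X=X|_{\hh_{22}}$ and (ii) finishes the argument.

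The main obstacle is the boundary-vanishing assertion $F(\tbb)|_{\hh_{22}}=0$: it is the bridge between the algebraic bound $\sup_n\|E_n\|<\infty$ and the spectral/measure-theoretic properties of $X$. Everything else is bookkeeping with Lemma~\ref{sspor} and standard dominated-convergence arguments on the semispectral measure. I expect that the density of $\ob{|E|}$ in $\hh_{22}$ together with the boundedness of the projection-valued object $F(\tbb)$ will be the slightly delicate point when passing from vectors in $\ob{|E|}$ to arbitrary vectors in $\hh_{22}$, but this is handled simply by continuity of $f\mapsto \is{F(\tbb)f}{f}$.
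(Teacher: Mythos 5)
Your argument is correct and follows the same skeleton as the paper's proof: \eqref{reotp} together with Lemma~\ref{sspor} gives the reduction statements in (i); the estimate $\sum_{j=0}^{n-1}\|X^j|E|h\|^2=\is{E_n^*E_nh}{h}\Le\big(\sup_{m\Ge1}\|E_m\|\big)^2\|h\|^2$ from Lemma~\ref{xyzzyx}(iii) (note that rewriting $\is{X^{*j}X^jE^*Eh}{h}$ as $\|X^j|E|h\|^2$ tacitly uses $X|E|=|E|X$, available from \eqref{gqb-3} and the square root theorem) controls the powers of $X$ on $\ob{|E|}$; a semispectral-measure argument kills the mass on $\tbb$; dominated convergence gives (ii); and (iii) is immediate since injectivity of $E$ forces $\hh_{22}=\hh_2$. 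The one genuine divergence is at the boundary-vanishing step: the paper combines \eqref{insss} with the cited identity $\lim_{n\to\infty}\|X^nf\|^2=\is{F(\tbb)f}{f}$ from \cite[Proposition~4.2(i)]{J-J-S22}, whereas you obtain $\is{F(\tbb)f}{f}=0$ for $f\in\ob{|E|}$ directly from \eqref{mymor} and $\supp F\subseteq\dbbc$ via monotone convergence (divergence of $\sum_n|z|^{2n}$ on $\tbb$ against square-summability of $\{\|X^nf\|\}_n$), which makes the proof self-contained at that point; the passage to $F(\tbb)|_{\hh_{22}}=0$ by positivity/continuity and density of $\ob{|E|}$ in $\hh_{22}$ is fine. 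A further small difference is that in (ii) you integrate $|z|^{2n}$ rather than $z^n$, so your dominated-convergence argument actually yields strong stability of $X|_{\hh_{22}}$, slightly more than the weak stability asserted (and proved) in the paper; both deductions are valid.
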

   \begin{proof}
(i) By \eqref{reotp}, the space $\hh_{22}$ reduces
$X$. In view of Lemma~\ref{sspor} applied to
$\mcal_1=\hh_{21}$ and $\mcal_2=\hh_{22}$, it remains
to show that $F(\tbb)|_{\hh_{22}}=0$. It follows from
\eqref{insss} that $\lim_{n\to \infty} \|X^n f\|=0$
for $f\in \ob{|E|}$. In turn, by
\cite[Proposition~4.2(i)]{J-J-S22}, we have
$\lim_{n\to \infty} \|X^n f\|^2= \is{F(\tbb)f}{f}$ for
$f\in \hh_2$. Combining these two equalities, we see
that $\is{F(\tbb)f}{f}=0$ for all $f\in \ob{|E|}$,
which yields $F(\tbb)|_{\hh_{22}}=0$.

(ii) Let $F_2$ be the semispectral measure of
$X_2:=X|_{\hh_{22}}$, $N_2$ be a minimal normal extension of
$X_2$ and $G_2$ be the spectral measure of $N_2$. Since by
\cite[Corollary~II.2.17]{Co91}, $\|N_2\|=\|X_2\|\Le 1$, we
see that $G_2$ and consequently $F_2$ are supported in
$\overline{\mathbb{D}}$. It follows from (i) that
$F_2(\tbb)=0$. Hence, applying \eqref{mymor} to $T=X_2$ and
$F=F_2$, we see that
   \begin{align*}
\is{X_2^n f}{f} = \int_{\mathbb{D}} z^n \is{F_2(dz)f}{f},
\quad n \Ge 0, \, f\in \hh_{22}.
   \end{align*}
Using the Lebesgue dominated convergence theorem, we conclude
that (ii) holds.

(iii) If $E$ is injective and consequently
$\jd{|E|}=\{0\}$, we see that $\hh_{22} = \hh_2$, and
so (ii) implies (iii).
   \end{proof}
   \begin{proof}[Proof of Theorem~\ref{puwsa}]
That the conditions (i) and (ii) are equivalent follows from
\eqref{wyudr}, Lemma~\ref{nubt}(ii) and Theorem~\ref{puwsb}.
If (ii) is satisfied, then by Lemma~\ref{nubt}(ii),
$\text{\sc (wot)} \lim_{n\to\infty} X^n=B\oplus 0$, and thus,
in view of Theorem~\ref{puwsb}, the equality \eqref{mytrfwb}
holds with $A=B \oplus 0$. The ``furthermore'' part is
obvious.
   \end{proof}
   \section{\label{Sec.4.5}Strong stability of $T^*$}
We begin with a general criterion for strong stability
of the adjoint of an operator. Note that due to the
uniform boundedness principle the power boundedness of
an operator $T\in \ogr{\hh}$ is a necessary condition
for $T^*$ to be strongly stable. Recall that an
operator $T\in \ogr{\hh}$ is said to be {\em analytic}
if $\bigcap_{n=0}^{\infty} \ob{T^n}=\{0\}$.
   \begin{lem} \label{srtrsde}
If $T\in \ogr{\hh}$ is a power bounded operator such
that $\bigcap_{n=0}^{\infty}
\overline{\ob{T^n}}=\{0\}$, then $T^*$ is strongly
stable. In particular, this is the case when $T$ is
power bounded, left-invertible and analytic.
   \end{lem}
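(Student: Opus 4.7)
The plan is to reduce the strong stability of $T^*$ to the strong convergence to zero of an associated decreasing sequence of orthogonal projections. By power boundedness, set $M:=\sup_{n\Ge 0}\|T^n\|<\infty$, and observe that $\|T^{*n}\|=\|T^n\|\Le M$. For each $n\Ge 0$, let $P_n\in\ogr{\hh}$ denote the orthogonal projection of $\hh$ onto $\overline{\ob{T^n}}$. Since $\jd{T^{*n}}=\ob{T^n}^{\perp}$ and $I-P_n$ is the orthogonal projection onto this orthogonal complement, we get $T^{*n}(I-P_n)=0$; hence $T^{*n}=T^{*n}P_n$ and the crucial estimate
\[
\|T^{*n}h\|\Le M\|P_n h\|,\quad h\in\hh,\ n\Ge 0.
\]

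The inclusions $\ob{T^{n+1}}\subseteq\ob{T^n}$ yield $\overline{\ob{T^{n+1}}}\subseteq\overline{\ob{T^n}}$, so $\{P_n\}_{n=0}^{\infty}$ is a decreasing sequence of orthogonal projections. By the standard monotone convergence theorem for projections (Vigier), it converges in the {\sc sot} to the orthogonal projection onto $\bigcap_{n=0}^{\infty}\overline{\ob{T^n}}$, which is zero by hypothesis. Thus $P_n h\to 0$ for every $h\in\hh$, and the displayed estimate gives $T^{*n}h\to 0$, proving that $T^*$ is strongly stable.

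For the ``in particular'' part, assume $T$ is power bounded, left-invertible and analytic. If $S\in\ogr{\hh}$ satisfies $ST=I$, then $S^n T^n=I$, so $T^n$ is bounded below and $\ob{T^n}$ is closed for every $n\Ge 0$. Hence $\overline{\ob{T^n}}=\ob{T^n}$, and the analyticity assumption $\bigcap_{n=0}^{\infty}\ob{T^n}=\{0\}$ coincides with the hypothesis of the first part. The main obstacle (really the only nonroutine step) is recognizing the identity $T^{*n}=T^{*n}P_n$, which converts strong stability of $T^*$ into strong convergence of the nested projections $\{P_n\}$; once this is in place, the argument is a routine application of monotone projection convergence combined with power boundedness.
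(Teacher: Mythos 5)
Your proof is correct and follows essentially the same route as the paper: both arguments rest on the identity $\jd{T^{*n}}=\overline{\ob{T^n}}^{\perp}$ together with power boundedness, the paper expressing the convergence through density of $\bigcup_{n}\jd{T^{*n}}$ in $\hh$ while you package the same fact as $P_n\to 0$ in the {\sc sot} via monotone convergence of the decreasing projections $P_n$ and the identity $T^{*n}=T^{*n}P_n$. The ``in particular'' part is handled identically in both proofs (powers of a left-invertible operator are left-invertible, hence have closed range, so analyticity gives $\bigcap_{n}\overline{\ob{T^n}}=\{0\}$).
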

   \begin{proof}
Suppose that $T\in \ogr{\hh}$. First, observe that
   \begin{align} \label{vidg}
\bigcap_{n=0}^{\infty} \overline{\ob{T^n}} = \Big(
\bigvee_{n=0}^{\infty} \jd{T^{*n}}\Big)^{\perp}.
   \end{align}
It easy to see that $\lim_{n\to \infty} \|T^{*n}h\|=
0$ for every $h \in \mathcal E$, where $\mathcal E =
\bigcup_{n=0}^{\infty} \jd{T^{*n}}$ (clearly,
$\mathcal E$ is a vector space). If $T$ is power
bounded, then $\lim_{n\to \infty} \|T^{*n}h\|= 0$ for
every $h\in \overline {\mathcal E}$. This combined
with \eqref{vidg} and the fact that powers of
left-invertible operators being left-invertible have
closed range completes the proof.
   \end{proof}
Before describing $\overline{\ob{T^n}}$ for a
B-operator $T$, we need the following fact.
   \begin{align} \label{pywurs}
   \begin{minipage}{72ex}
{\em Fix a positive integer $n$. If $T\in \ogr{\hh}$,
then $T$ is left-invertible if and only if $T^n$ is
left-invertible. Consequently, if $T$ is injective,
then $T$ has closed range if and only if $T^n$ has
closed range.}
   \end{minipage}
   \end{align}
Indeed, the first statement can be proved purely
algebraically. The second statement follows from the
first and the well-known fact that $T$ is
left-invertible if and only if $T$ is injective and
has closed range.
   \begin{lem} \label{tttp}
Let $T = \big[\begin{smallmatrix} V & E \\
0 & X \end{smallmatrix}\big]$ be a B-operator relative
to $\hh_1\oplus \hh_2$ and \mbox{$n\Ge 1$}. Set
$S_n=\Big[\begin{smallmatrix} I & 0 \\[.3ex] 0
& \sqrt{|E_n|^2+|X^n|^2}\end{smallmatrix}\Big]$. Then
there exists a unique unitary isomorphism $U\colon
\overline{\ob{T^n}} \to \overline{\ob{S_n}}$ such that
$UT^n=S_n$. Moreover, the following are
\mbox{equivalent}{\em :}
   \begin{enumerate}
   \item[(i)] $\ob{T^n}$ is closed,
   \item[(ii)] $\ob{\sqrt{|E_n|^2+|X^n|^2}\/}$ is closed,
   \item[(iii)] $\ob{|E_n|}+\ob{|X^n|}$ is closed,
   \item[(iv)] $\ob{E_n^*}+\ob{X^{*n}}$ is closed.
   \end{enumerate}
Furthermore, if $E$ or $X$ is injective, then
$\ob{T^n}$ is closed if and only if $\ob{T}$ is
closed, or equivalently if and only if $|E|^2+|X|^2$
is invertible in $\ogr{\hh_2}$.
   \end{lem}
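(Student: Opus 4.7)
The first observation is that by Lemma~\ref{xyzzyx}(ii) and uniqueness of the positive square root, $S_n=|T^n|$. So $U$ is the partial-isometry piece of the polar decomposition of $T^n$: writing $T^n=W|T^n|=WS_n$ with $W$ a partial isometry restricting to a unitary from $\overline{\ob{S_n}}$ onto $\overline{\ob{T^n}}$, the operator $U:=W^{*}|_{\overline{\ob{T^n}}}$ is a unitary $U\colon\overline{\ob{T^n}}\to\overline{\ob{S_n}}$ satisfying $UT^n=W^*WS_n=S_n$. Uniqueness holds because any bounded $U$ with $UT^n=S_n$ is already prescribed on the dense subspace $\ob{T^n}$ of $\overline{\ob{T^n}}$.

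For the equivalence of (i)--(iv), $U$ is a homeomorphism carrying $\ob{T^n}$ bijectively onto $\ob{S_n}=\hh_1\oplus\ob{\sqrt{|E_n|^2+|X^n|^2}}$, which yields (i)$\Leftrightarrow$(ii) at once. To link (ii) with (iii) and (iv) the key tool is the set-equality
\begin{align*}
\ob{A}+\ob{B}=\ob{\sqrt{AA^*+BB^*}\,},
\end{align*}
valid for any bounded operators $A,B$ with common target. It follows by applying the Douglas-type identity $\ob{C}=\ob{\sqrt{CC^*}\,}$ -- a direct consequence of Douglas's factorization theorem, or equivalently of the left polar decomposition -- to the row operator $C$ defined on the orthogonal sum of the domains of $A$ and $B$ by $(h,k)\mapsto Ah+Bk$, for which $CC^*=AA^*+BB^*$ and $\ob{C}=\ob{A}+\ob{B}$. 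Choosing $A=|E_n|$, $B=|X^n|$ gives (ii)$\Leftrightarrow$(iii); the special case $B=0$ reduces to $\ob{|T|}=\ob{T^*}$, applied to $E_n$ and to $X^n$, yielding (iii)$\Leftrightarrow$(iv).

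For the furthermore assertion, first verify that $T$ is injective whenever $E$ or $X$ is: if $T(h_1\oplus h_2)=0$, then $Xh_2=0$ and $Vh_1+Eh_2=0$; applying $V^*$ to the latter, together with $V^*V=I$ and $V^*E=0$, forces $h_1=0$ and hence $Eh_2=0$, placing $h_2\in\jd{E}\cap\jd{X}=\{0\}$. Then \eqref{pywurs} shows $\ob{T^n}$ is closed iff $\ob{T}$ is closed, which by the already-proved (i)$\Leftrightarrow$(ii) at $n=1$ is equivalent to $\ob{\sqrt{|E|^2+|X|^2}\,}$ being closed. Since $|E|^2+|X|^2$ has kernel $\jd{E}\cap\jd{X}=\{0\}$, its positive square root is injective with dense range, and closed range then forces surjectivity -- i.e., invertibility, which is equivalent to invertibility of $|E|^2+|X|^2$ itself.

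The point requiring the most care is the range identity $\ob{A}+\ob{B}=\ob{\sqrt{AA^*+BB^*}\,}$ as an equality of sets, not merely of closures; the closures-coincide version is automatic from $\overline{\ob{C}}=\jd{C^*}^{\perp}$, but the sharper set equality -- which makes (ii)$\Leftrightarrow$(iii)$\Leftrightarrow$(iv) genuine without extra closure operations -- rests on the Douglas-type identity $\ob{C}=\ob{\sqrt{CC^*}\,}$. Once that is granted, the remainder is bookkeeping with the polar decomposition and \eqref{pywurs}.
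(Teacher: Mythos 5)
Your argument is correct and follows essentially the same route as the paper: you identify $S_n$ with $|T^n|$ (the paper obtains the same unitary $U$ from the direct norm identity $\|T^n h\|=\|S_n h\|$, which rests on $\ob{V^n}\perp\ob{E_n}$ and Lemma~\ref{xyzzyx}), you use Crimmins' range formula $\ob{A}+\ob{B}=\ob{\sqrt{AA^*+BB^*}}$ for the equivalence of (ii)--(iv) (which the paper simply cites from \cite{Fil-Wil71} and you re-prove via the row-operator trick), and you settle the ``furthermore'' part through injectivity of $T$, \eqref{pywurs} and invertibility of $|E|^2+|X|^2$. The remaining differences are cosmetic: polar decomposition in place of the explicit isometric extension, and your dense-range/closed-range argument for $\sqrt{|E|^2+|X|^2}$ in place of the paper's passage through left-invertibility of $T$ and invertibility of $T^*T$.
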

   \begin{proof}
It follows from \eqref{gqb-2} and \eqref{syru} that
$V^{*n}E_n=0$, or equivalently that $\ob{V^n} \perp
\ob{E_n}$. This, together with Lemma~\ref{xyzzyx}(i)
yields
   \allowdisplaybreaks
   \begin{align*}
\|T^n (h_1\oplus h_2)\|^2 & = \|V^nh_1 \oplus E_nh_2
\oplus X^nh_2\|^2
   \\
&= \|h_1\|^2 + \|E_nh_2\|^2 + \|X^nh_2\|^2
   \\
& = \|S_n (h_1\oplus h_2)\|^2, \quad h_1\in \hh_1,\,
h_2 \in \hh_2.
   \end{align*}
Therefore, there exists a unique unitary isomorphism
$U\colon \overline{\ob{T^n}} \to \overline{\ob{S_n}}$
such that $UT^n=S_n$. As a consequence, the conditions
(i) and (ii) are equivalent. Since by the theorem of
T. Crimmins (see \cite[Theorem~2.2]{Fil-Wil71})
$\ob{A} + \ob{B}=\ob{\sqrt{AA^*+BB^*}}$ for $A,B\in
\ogr{\hh}$, we deduce that the conditions (ii)-(iv)
are equivalent.

Now we prove the ``furthermore'' part. Suppose that
$E$ or $X$ is injective. Then $D:=|E|^2+|X|^2$ is
injective. Hence, by Lemma~\ref{xyzzyx}(ii), $T$ is
injective. Therefore, by \eqref{pywurs}, $T$ has
closed range if and only if $T^n$ has closed range.
Finally, $T$ has closed range if and only if $T$ is
left-invertible, or equivalently if and only if $T^*T$
is invertible. By Lemma~\ref{xyzzyx}(ii), this is
equivalent to the invertibility of $D$.
   \end{proof}
It may happen that for a B-operator $T$, $T^2$ has
closed range, but $T$ does not.
   \begin{exa}
Let $\hh$ be an infinite-dimensional Hilbert space.
Set $\hh_1=\hh_2=\hh\oplus \hh$. Let $A,C\in
\ogr{\hh}$ be operators such that $\ob{A^*}$ is
closed, $\ob{C^*}$ is not closed and $A^*AC=0$ (e.g.,
consider $A=A_1\oplus 0$ and $C=0\oplus C_1$ relative
to $\hh=\mcal \oplus \mcal$ such that $\ob{A_1^*}$ is
closed and $\ob{C_1^*}$ is not closed). Take an
isometry $V\in\ogr{\hh_1}$ such that $\ob{V}
\subseteq \{0\} \oplus \hh$. Set $E=\big[\begin{smallmatrix} A & 0  \\
0 & 0\end{smallmatrix}\big]$ and  $X=\big[\begin{smallmatrix} 0 & C  \\
0 & 0\end{smallmatrix}\big]$ (relative to $\hh\oplus
\hh$),
and $T=\big[\begin{smallmatrix} V  & E  \\
0 & X\end{smallmatrix}\big]$ (relative to $\hh_1\oplus
\hh_2$). It is easy to see that $T$ is a B-operator.
Since $\ob{Z^*}=\ob{|Z|}$ for any operator $Z$ and
$\sqrt{|E|^2 + |X|^2} =
\Big[\begin{smallmatrix} |A| & 0  \\
0 & |C|\end{smallmatrix}\Big]$, we infer from
Lemma~\ref{tttp} that $\ob{T}$ is not closed. In turn,
by Lemma~\ref{xyzzyx}(iii) and the commutativity of
$|X|$ and $|E|$, we have
   \begin{align*}
\sqrt{|E_2|^2 + |X^2|^2} = |E_2| = (I +
|X|^2)^{1/2}|E| =
\Big[\begin{smallmatrix} |A| & 0  \\
0 & 0\end{smallmatrix}\Big].
   \end{align*}
Hence, by Lemma~\ref{tttp} again, $\ob{T^2}$ is
closed.
   \end{exa}
   \begin{rem}
Regarding Lemma~\ref{tttp}, let us observe that $X$ is
injective if and only if $X^n$ is injective for some
(equivalently, for every) positive integer $n$. In
turn, by Lemma~\ref{xyzzyx}(iii), $E$ is injective if
and only if $E_n$ is injective for some (equivalently,
for every) positive integer $n$. Moreover,
$|E|^2+|X|^2$ is invertible if and only if
$|E_n|^2+|X^n|^2$ is invertible for some
(equivalently, for every) positive integer $n$.
Indeed, by Lemma~\ref{xyzzyx}(ii), $|E|^2+|X|^2$ is
invertible if and only if $T^*T$ is invertible, or
equivalently if and only if $T$ is left-invertible. In
view of \eqref{pywurs}, the latter is equivalent to
the left-invertibility of $T^n$ and consequently the
invertibility of $T^{*n}T^n$. Applying
Lemma~\ref{xyzzyx}(ii) completes the proof.
   \hfill $\diamondsuit$
   \end{rem}
We now provide sufficient conditions for the adjoint
of a B-operator to be strongly stable (see
Remark~\ref{nycuxa} for necessary conditions).
   \begin{thm} \label{crytstyr}
Let $T = \big[\begin{smallmatrix} V & E \\
0 & X \end{smallmatrix}\big]$ be a B-operator relative
to $\hh_1\oplus \hh_2$ such that $V$ is completely
non-unitary, $\sup_{n\Ge 1} \|E_n\| < \infty$ and $X$
is power bounded. Suppose that $X$ is injective and
analytic, and $|E|^2+|X|^2$ is invertible in
$\ogr{\hh_2}$. Then $T^*$ is strongly stable.
   \end{thm}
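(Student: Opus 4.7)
The plan is to verify the two hypotheses of Lemma~\ref{srtrsde} for $T$: that $T$ is power bounded, and that $\bigcap_{n=0}^{\infty}\overline{\ob{T^n}}=\{0\}$. Power boundedness is immediate from Lemma~\ref{xyzzyx}(ii): the block-diagonal form of $T^{*n}T^n$ gives
$$\|T^n\|^2 = \|T^{*n}T^n\| \Le \max(1,\, \|E_n\|^2 + \|X^n\|^2),$$
and the right-hand side is uniformly bounded in $n$ by the hypotheses $\sup_{n\Ge 1}\|E_n\|<\infty$ and $X$ power bounded.

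For the intersection of ranges, I would first invoke the ``furthermore'' part of Lemma~\ref{tttp}: since $X$ is injective and $|E|^2+|X|^2$ is invertible in $\ogr{\hh_2}$, every $\ob{T^n}$ is closed, so the closures in the criterion of Lemma~\ref{srtrsde} can be dropped and it suffices to prove $\bigcap_{n\Ge 1}\ob{T^n}=\{0\}$.

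Suppose $k_1\oplus k_2\in\bigcap_{n\Ge 1}\ob{T^n}$. By the block form of $T^n$ in Lemma~\ref{xyzzyx}(i), for each $n\Ge 1$ there exist $h_1^{(n)}\in\hh_1$ and $h_2^{(n)}\in\hh_2$ with
$$k_1 = V^n h_1^{(n)} + E_n h_2^{(n)}, \qquad k_2 = X^n h_2^{(n)}.$$
The second equality places $k_2$ in $\bigcap_{n\Ge 1}\ob{X^n}$, which is $\{0\}$ by the analyticity of $X$; the injectivity of $X$ (and hence of each $X^n$) then forces $h_2^{(n)}=0$, so $k_1\in\ob{V^n}$ for every $n\Ge 1$. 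Since $V$ is completely non-unitary, the Wold decomposition gives $\bigcap_{n=0}^{\infty}\ob{V^n}=\hh_{\mathrm{1u}}=\{0\}$, whence $k_1=0$. Lemma~\ref{srtrsde} now yields that $T^*$ is strongly stable.

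The proof is essentially an assembly of Lemmas~\ref{xyzzyx}, \ref{tttp}, and \ref{srtrsde} together with the Wold decomposition, and I do not anticipate any genuine obstacle. The one point requiring care is the reduction from $\overline{\ob{T^n}}$ to $\ob{T^n}$: this is where the invertibility hypothesis on $|E|^2+|X|^2$ enters, via Lemma~\ref{tttp}, and it allows the clean coordinate-wise chase that decouples the analyticity of $X$ from the complete non-unitarity of $V$.
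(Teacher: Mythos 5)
Your proof is correct and follows essentially the same route as the paper: both reduce the statement to Lemma~\ref{srtrsde} via the block formulas of Lemma~\ref{xyzzyx} and the same coordinate chase combining the analyticity and injectivity of $X$ with the complete non-unitarity of $V$ (through the Wold decomposition). The only cosmetic differences are that the paper obtains the needed closedness of ranges from the left-invertibility of $T$ (the ``in particular'' clause of Lemma~\ref{srtrsde}, with $T^*T$ invertible by Lemma~\ref{xyzzyx}(ii)) rather than from the furthermore part of Lemma~\ref{tttp}, and that it derives $E_nh_{2,n}=0$ via the intertwining $XE^*E=E^*EX$ instead of concluding $h_{2,n}=0$ directly from the injectivity of $X^n$; your more direct shortcut is equally valid.
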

   \begin{proof}
In view of Lemma~\ref{srtrsde}, to complete the proof,
it suffices to show that $T$ is power bounded,
left-invertible, and analytic. It follows from
Lemma~\ref{xyzzyx}(i) that $T$ is power bounded if and
only if $\sup_{n\Ge 1} \|E_n\| < \infty$ and $X$ is
power bounded. In turn, $T$ is left-invertible if and
only if $T^*T$ is invertible, or equivalently by
Lemma~\ref{xyzzyx}(ii), the sum $|E|^2+|X|^2$ is
invertible. It remains to show that $T$ is analytic.
Take $h_1\oplus h_2 \in \bigcap_{n=0}^{\infty}
\ob{T^n}$, where $h_1\in \hh_1$ and $h_2\in \hh_2$.
Then there exist sequences
$\{h_{1,n}\}_{n=1}^{\infty}\subseteq \hh_1$ and
$\{h_{2,n}\}_{n=1}^{\infty}\subseteq \hh_2$ such that
$h_1\oplus h_2 = T^n (h_{1,n}\oplus h_{2,n})$ for
every integer $n\Ge 1$. According to
Lemma~\ref{xyzzyx}(i), we have
   \begin{align} \label{hyntew}
h_1=V^n h_{1,n} + E_n h_{2,n} \quad \text{and} \quad
h_2=X^n h_{2,n} \quad \text{for every integer $n\Ge
1$.}
   \end{align}
By the analyticity of $X$, we conclude that $h_2=0$.
Using \eqref{gqb-3} and \eqref{hyntew}, we get
   \begin{align*}
X^nE^*E h_{2,n} = E^*E X^n h_{2,n} = E^*E h_{2} =0,
\quad n\Ge 1.
   \end{align*}
Knowing that $X^n$ is injective, we see that $E
h_{2,n}=0$. Hence, by Lemma~\ref{xyzzyx}(iii), $E_n
h_{2,n}=0$. This, together with \eqref{hyntew},
implies that $h_1=V^n h_{1,n}$ for all $n \Ge 1$. It
follows from the Wold decomposition theorem that $V$
being completely non-unitary is analytic, so $h_1=0$.
Consequently, $T$ is analytic.
   \end{proof}
   \begin{rem} \label{nycuxa}
Regarding Theorem~\ref{crytstyr}, we note that the
assumptions that $V$ is completely non-unitary,
$\sup_{n\Ge 1} \|E_n\| < \infty$ and $X$ is power
bounded are necessary for the adjoint of a B-operator
$T = \big[\begin{smallmatrix} V & E \\
0 & X \end{smallmatrix}\big]$ to be strongly stable.
Indeed, assume that $T^*$ is strongly stable. Then, by
Lemma~\ref{xyzzyx}(i), $T^{*n}=
\big[\begin{smallmatrix} V^{*n} & 0 \\
E_n^* & X^{*n} \end{smallmatrix}\big]$ for $n\Ge 1$.
Thus, by the uniform boundedness principle, $X$ is
power bounded, $\sup_{n\Ge 1} \|E_n\| < \infty$, and
$\text{\sc (sot)}\lim_{n\to\infty} V^{*n}=0$. Using
the Wold decomposition theorem, we conclude that $V$
is completely non-unitary.

To have an example illustrating
Theorem~\ref{crytstyr}, consider three isometries
$V\in \ogr{\hh_1}$, $W_1\in \ogr{\hh_2,\hh_1}$ and
$W_2\in \ogr{\hh_2}$ such that $V$ and $W_2$ are
completely non-unitary and $\ob{V}\perp \ob{W_1}$. Let
$z,w$ be two nonzero complex numbers such that
$|w| < 1$. Set $E=z W_1$ and $X=w W_2$. Then $T = \big[\begin{smallmatrix} V & E \\
0 & X\end{smallmatrix}\big]$ is a B-operator relative
to $\hh_1\oplus \hh_2$, which is not strongly stable
(see Corollary~\ref{niekj++}), which is not
block-diagonal and whose adjoint $T^*$ is strongly
stable (use the Wold decomposition theorem and
Theorem~\ref{crytstyr}).
   \hfill $\diamondsuit$
   \end{rem}
   \section{\label{Sec.5}Examples} Regarding Theorems~\ref{puwsb} and
\ref{puwsa}, it is worth mentioning that in general
the power sequence $\{U^n\}_{n=1}^{\infty}$ is not
{\sc wot}-convergent. The trivial reason is that if
the sequence $\{U^n\}_{n=1}^{\infty}$ is {\sc
wot}-convergent to a nonzero operator (e.g., $U=I$),
then $\{(-U)^n\}_{n=1}^{\infty}$ is not {\sc
wot}-convergent. Non-trivial examples related to the
{\sc wot}-convergence of the power sequence
$\{U^n\}_{n=1}^{\infty}$ are given below. We refer the
reader to Corollary~\ref{juzyr} for the relationship
between the {\sc wot}-convergence of
$\{U^n\}_{n=1}^{\infty}$ and the weak stability of $U$
for a unitary $U$.
   \begin{exa}
Fix an integer $m\Ge 2$ and consider a unitary operator $U\in
\ogr{\hh}$ whose spectrum equals $\{z_0, \ldots, z_{m-1}\}$,
the set of all $m$th roots of $1$. Since the latter set is a
cyclic multiplicative subgroup of $\tbb$, we can assume that
$z_j=z_1^j$ for $j=0, \dots, m-1$. Then, by the spectral
theorem $U=\sum_{j=0}^{m-1} z_1^j P_j$, where
$\{P_j\}_{j=0}^{m-1}$ are nonzero orthogonal projections such
that $\hh=\bigoplus_{j=0}^{m-1} \ob{P_j}$. Thus, we have
   \begin{align*}
\is{U^{km+j}f}{f} = z_1^{j} \|f\|^2, \quad k \Ge 0, \,
j\in \{0, \ldots, m-1\}, \, f\in \ob{P_1}.
   \end{align*}
Hence, if $f$ is a nonzero vector in $\ob{P_1}$, then
the sequence $\{\is{U^n f}{f}\}_{n=1}^{\infty}$ has
$m$ distinct cluster points, showing that
$\{U^n\}_{n=1}^{\infty}$ is not {\sc wot}-convergent.

It could be even worse, namely, if $U=zI$, where
$z=\E^{2\pi\I \vartheta}$ and $\vartheta$ is an irrational
number, then by Jacobi's theorem (see
\cite[Theorem~I.3.13]{Dev03}) the closure of the set
$\{\is{U^n f}{f}\}_{n=1}^{\infty}$ is equal to $\tbb$ for
every normalized vector $f\in \hh$.

In turn, it can happen that the sequence
$\{U^n\}_{n=1}^{\infty}$ does converge in the {\sc wot},
e.g., when the spectral measure $G$ of a unitary operator
$U\in \ogr{\hh}$ is absolutely continuous with respect to
$\lambda$, the normalized Lebesgue measure on $\tbb$. Indeed,
by the spectral theorem and the Radon-Nikodym theorem,
   \begin{align*}
\is{U^n f}{f} = \Big\langle\int_{\tbb} z^n G(\D
z)f,f\Big\rangle =\int_{\tbb} z^n \phi_f(z) \lambda(\D z),
\quad n \Ge 0, \, f\in \hh,
   \end{align*}
where $\phi_f\in L^1(\lambda)$ is the Radon-Nikodym
derivative of $\is{G(\cdot)f}{f}$ with respect
to~$\lambda$. Applying the Riemann-Lebesgue lemma (see
\cite[Theorem~5.15]{Rud}) shows that $U$ is weakly
stable. A simple example of a unitary operator with
these properties is the operator of multiplication by
the independent variable ``$z$'' on
$L^2(\tbb,\lambda)$. This kind of result is related to
the characterization of weak stability via Rajchman
measures (see \cite[Proposition~IV.1.5]{Eis10}). We
also refer the reader to \cite{Kub16} for more
information on the weak stability of unitary operators
and its relationship to supercyclicity. In particular,
it is shown there that there are singular-continuous
unitary operators that are weakly stable, as well as
singular-continuous unitary operators that are weakly
unstable.
   \hfill $\diamondsuit$
   \end{exa}
Referring to the previous example, it is worth making
the following remark.
   \begin{rem}
As we know, every completely non-unitary isometry is
weakly stable. In fact, the same property is shared by
completely non-unitary contractions, namely, each such
operator is weakly stable (see
\cite[Corollary~7.4]{Kub97}). On this occasion, let us
recall that every completely non-unitary cohyponormal
contraction is strongly stable (see
\cite[Theorem~3]{Put75}; see also
\cite[Theorem]{Kub-Vie94}). Another result in this
spirit can be found in
\cite[Corollary~3.6(a)]{Chav08}.
   \hfill $\diamondsuit$
   \end{rem}
We will now discuss the issue of weak stability of
B-isometries (which are called quasi-Brownian
isometries in \cite{A-C-J-S19} and
$\triangle_T$-regular $2$-isometries in \cite{Maj}).
   \begin{exa} \label{bsaq}
Take two infinite dimensional Hilbert spaces $\hh_1$ and
$\hh_2$ such that $\dim \hh_1 \Ge \dim \hh_2$. Let $V\in
\ogr{\hh_1}$ and $X\in \ogr{\hh_2}$ be isometries and
$E\in\ogr{\hh_2,\hh_1}$ be such that $\ob{V}\perp \ob{E}$
and $XE^*E=E^*EX$. Then $T := \big[\begin{smallmatrix} V & E \\
0 & X \end{smallmatrix}\big] \in \gibh$. It follows
from Lemma~\ref{xyzzyx}(iii) that $E_n^*E_n = n E^*E$
for every $n\Ge 1$. This implies that $\sup_{n\Ge 1}
\|E_n\| < \infty$ if and only $E=0$. Hence, by
Corollary~\ref{wstib}, $T$ is weakly stable if and
only if $E=0$ and the unitary parts of $V$ and $X$ are
weakly stable (recall that $\|T^*T-I\|^{1/2}=\|E\|$ is
called the covariance of $T$). In particular, if $T$
is Brownian isometry of covariance $\sigma > 0$ (in
the Agler-Stankus terminology, see
\cite[Proposition~5.37]{Ag-St95-6}), that is, $V$ is
isometric, $X$ is unitary and $E=\sigma W$, where
$W\in \ogr{\hh_2,\hh_1}$ is an injective contraction
such that $\ob{V} \perp \ob{W}$ and $XW^*W=W^*WX$,
then $T$ is never weakly stable.
   \hfill $\diamondsuit$
   \end{exa}
Next, we will give an example of a weakly stable
B-normal operator which is not block-diagonal. In this
example, $E$ can be chosen so that $E$ is injective
and $\ob{|E|}$ is closed.
   \begin{exa}[Example~\ref{bsaq} continued]  \label{hgaq}
Let $V, E, X$ be as in Example~\ref{bsaq}. Assume
furthermore that $V$ is completely non-unitary, $E$ is
injective and $X$ is unitary. Let $z$ be a complex
number
such that $0<|z|<1$. Then clearly $T_z := \big[\begin{smallmatrix} V & E \\
0 & zX\end{smallmatrix}\big] \in \gnbh$. By
\cite[Proposition~3.10]{C-J-J-S21},
$\{T_z^n\}_{n=1}^{\infty} \subseteq \gqbh$. As powers
of normal operators are normal, we deduce from
Lemma~\ref{xyzzyx}(i) that $\{T_z^n\}_{n=1}^{\infty}
\subseteq \gnbh$. Knowing that $\|zX\| < 1$, the
unitary part $U$ of $V$ is trivial and $E$ is
injective, we conclude from the ``moreover'' parts of
Lemma~\ref{cuscik} and Corollary~\ref{strbli} that
$T_z$ is weakly stable. Hence, $T_z$ is a weakly
stable B-normal operator which is not block-diagonal
relative to $\hh_1\oplus \hh_2$. By choosing $E$ as a
non-zero multiple of an isometry $W\in
\ogr{\hh_2,\hh_1}$ with range orthogonal to $\ob{V}$,
we can guarantee that $\ob{|E|}$ is closed.
   \hfill $\diamondsuit$
   \end{exa}
Now we provide a concrete example of a
weakly stable B-normal operator $T = \big[\begin{smallmatrix} V & E \\
0 & X \end{smallmatrix}\big]$ for which $\ob{|E|}$ is
not closed. This is closely related to
Lemma~\ref{tywsr}.
   \begin{exa} \label{nitclyst}
If the hypothesis in Lemma~\ref{tywsr} that $\ob{|E|}$
is closed is deleted, the conclusion is no longer
valid. Indeed, let $\mu$ be a probability Borel
measure on $\bar{\mathbb{D}}$, the closed unit disk in
$\cbb$ centered at $0$, such that
   \begin{align} \label{pdif}
\mu(\tbb)=0 \quad \text{and} \quad \sup\{|z|\colon z
\in \supp\mu\}=1,
   \end{align}
where $\supp\mu$ stands for the closed support of
$\mu$. Set $\hh_1=\hh_2=L^2(\mu)$. Denote by $M_{\xi}$
the operator of multiplication on $L^2(\mu)$ by a
complex Borel function $\xi$ on $\bar{\dbb}$. Define
the continuous functions $\psi$ and $\varphi$ on
$\bar{\dbb}$ by
   \begin{align} \label{pdif2}
\psi(z)=(1-|z|)^{1/2} \quad \text{and}\quad
\varphi(z)=|z|^{1/2} \quad \text{for } z\in
\bar{\dbb}.
   \end{align}
Clearly, $M_{\psi}$ and $M_{\varphi}$ are bounded
commuting positive operators. It follows from
\eqref{pdif} that there exists a sequence
$\{z_n\}_{n=1}^{\infty}\subseteq \dbb \cap \supp\mu$
converging to a point in $\tbb \cap \supp\mu$. This
implies that $\dim L^2(\mu)=\aleph_0$. Thus, there
exist two isometries $V\in \ogr{\hh_1}$ and $W\in
\ogr{\hh_2,\hh_1}$ such that $\ob{V}\perp\ob{W}$.
Define the operator $E\in \ogr{\hh_2,\hh_1}$ by
$E=WM_{\psi}$ and set $X=M_{\varphi}$. Since
$|E|=M_{\psi}$, the operator $T = \big[\begin{smallmatrix} V & E \\
0 & X \end{smallmatrix}\big]$ is B-normal relative to
$\hh_1\oplus \hh_2$. Hence, $\{T^n\}_{n=1}^{\infty}
\subseteq \gnbh$ (see Example~\ref{hgaq}). Moreover,
by the first equality in \eqref{pdif} and
\eqref{pdif2}, $|E|$ is injective, so $E$ is injective
and $\hh_{22}=\hh_2$. We now claim that the sequence
$\{\|E_n\|\}_{n=1}^{\infty}$ is bounded. Indeed, it
follows from Lemma~\ref{xyzzyx}(iii) that for all
$f\in L^2(\mu)$ and $n\Ge 1$,
   \allowdisplaybreaks
   \begin{align*}
(E_n^*E_n f)(z) & = \Big(\sum_{j=0}^{n-1}X^{*j}X^j
E^*E f\Big)(z)
   \\
& = \sum_{j=0}^{n-1}|z|^j (1-|z|)f(z) = (1-|z|^n)f(z)
\quad \text{for $\mu$-a.e. } z\in \bar\dbb,
   \end{align*}
which implies that $\sup_{n\Ge 1} \|E_n\| \Le 1$. Thus
the condition (i) of Lemma~\ref{tywsr} is valid. In
turn, using the second equality in \eqref{pdif} we
deduce that $r(X)=\|X\|=1$. Hence by
Lemma~\ref{wikwi}(iv), the conditions (ii) and (iii)
of Lemma~\ref{tywsr} do not hold (thus $\ob{|E|}$ is
not closed). This together with Lemma~\ref{wikwi}(iii)
implies that there exists a unit vector $h\in \hh_2$
such that $\sum_{j=0}^{\infty} \is{X^{*j}X^j h}{h}=
\infty$. Using the moreover part of
Corollary~\ref{strbli}, we conclude that $T$ is weakly
stable if and only if the unitary part of $V$ is
weakly stable. To have a more concrete example of a
measure with the above properties, consider the
normalized planar Lebesgue measure on $\bar\dbb$.
Straightforward computations show that the condition
\eqref{pdif} holds and $\sum_{j=0}^{\infty}
\is{X^{*j}X^j h}{h}=\infty$, where $h(z)=1$ for every
$z\in \bar\dbb$.
   \hfill $\diamondsuit$
   \end{exa}
We conclude the paper with the following remark.
   \begin{rem}
It is worth pointing out that if $T=T_z$ is the
B-operator as in Example~\ref{hgaq} and $\mathscr{X}
\in \{\mathscr{N}, \mathscr{Q}, \mathscr{S},
\mathscr{H}$\}, then $\{T^n\}_{n=1}^{\infty} \subseteq
\gxbh$, each $T^n$ is not block-diagonal (use
Lemma~\ref{xyzzyx}), and $\text{\sc (wot)}
\lim_{n\to\infty} T^n=0$. The same conclusion can be
guaranteed for the B-operator $T= \big[\begin{smallmatrix} V & E \\
0 & X \end{smallmatrix}\big]$ appearing in
Example~\ref{nitclyst}.
   \hfill $\diamondsuit$
   \end{rem}

   \end{document}